\documentclass[12pt]{amsart}
\usepackage{amsmath,amssymb,amsthm,tikz,tikz-cd,color,xcolor,mathdots,amscd}
\usepackage[centertableaux]{ytableau}
\usepackage[all]{xy}
\usepackage[margin=1in]{geometry}
\usepackage{placeins}
\usetikzlibrary{shapes,arrows,svg.path}
\usepackage{booktabs}

\usepackage[colorlinks=true, pdfstartview=FitV, linkcolor=blue, citecolor=blue, urlcolor=blue]{hyperref}	


\theoremstyle{plain}
\newtheorem{theorem}{Theorem}[section]
\newtheorem{proposition}[theorem]{Proposition}

\newtheorem{lemma}[theorem]{Lemma}
\newtheorem{conjecture}[theorem]{Conjecture}

\theoremstyle{definition}

\theoremstyle{remark}
\newtheorem{remark}[theorem]{Remark}
\newtheorem{example}[theorem]{Example}
\numberwithin{equation}{section}

\definecolor{darkgreen}{RGB}{0,180,0}

\definecolor{darkred}{rgb}{0.7,0,0} 
\newcommand{\defn}[1]{{\color{darkred}\emph{#1}}} 

\newcommand{\ZZ}{\mathbb{Z}}

\newcommand{\CC}{\mathbb{C}}
\newcommand{\cc}{\mathbf{c}}

\newcommand{\zz}{\mathbf{z}}
\newcommand{\fP}{\mathfrak{P}}

\newcommand{\fsl}{\mathfrak{sl}}
\newcommand{\sym}[1]{S_{#1}}  
\newcommand{\states}{\mathfrak{S}}  
\newcommand{\model}{\mathcal{M}}  
\newcommand{\G}{\mathfrak{G}}  

\newcommand{\wt}{\operatorname{wt}}  
\newcommand{\abs}[1]{\lvert #1 \rvert}

\newcommand{\ssyt}{\operatorname{SSYT}}  
\newcommand{\svt}{\operatorname{SVT}}  
\newcommand{\rsvt}{\operatorname{RSVT}}  
\newcommand{\skyline}{\operatorname{SSLT}} 
\newcommand{\excess}{\operatorname{ex}}  
\newcommand{\stab}{\operatorname{Stab}}  
\newcommand{\Gr}{\operatorname{Gr}}  
\newcommand{\End}{\operatorname{End}}  

\hyphenpenalty=5000
\tolerance=1000

\usepackage{listings}
\lstdefinelanguage{Sage}[]{Python}
{morekeywords={False,sage,True},sensitive=true}
\lstset{
  frame=single,
  showtabs=False,
  showspaces=False,
  showstringspaces=False,
  commentstyle={\ttfamily\color{dgreencolor}},
  keywordstyle={\ttfamily\color{dbluecolor}\bfseries},
  stringstyle={\ttfamily\color{dgraycolor}\bfseries},
  language=Sage,
  basicstyle={\footnotesize\ttfamily},
  aboveskip=0.75em,
  belowskip=0.75em,
  xleftmargin=.15in,
}
\definecolor{dblackcolor}{rgb}{0.0,0.0,0.0}
\definecolor{dbluecolor}{rgb}{0.01,0.02,0.7}
\definecolor{dgreencolor}{rgb}{0.2,0.4,0.0}
\definecolor{dgraycolor}{rgb}{0.30,0.3,0.30}

\usepackage[colorinlistoftodos]{todonotes}

\setlength{\marginparwidth}{2cm}

\begin{document}
\title{Colored five-vertex models and Lascoux polynomials and atoms}

\author{Valentin Buciumas}
\address[V.~Buciumas]{School of Mathematics and Physics, 
The University of Queensland, 
St.\ Lucia, QLD, 4072, 
Australia}
\email{valentin.buciumas@gmail.com}
\urladdr{https://sites.google.com/site/valentinbuciumas/}

\author{Travis Scrimshaw}
\address[T.~Scrimshaw]{School of Mathematics and Physics, 
The University of Queensland, 
St.\ Lucia, QLD, 4072, 
Australia}
\email{tcscrims@gmail.com}
\urladdr{https://people.smp.uq.edu.au/TravisScrimshaw/}

\author{Katherine Weber}
\address[K.~Weber]{School of Mathematics,
University of Minnesota,
206 Church St.\ SE,
Minneapolis, MN 55455,
USA}
\email{webe0629@umn.edu}

\keywords{Lascoux polynomial, Lascoux atom, five-vertex model, colored lattice model, Grothendieck polynomial}
\subjclass[2010]{05E05, 82B23, 14M15, 05A19}

\begin{abstract}
We construct an integrable colored five-vertex model whose partition function is a Lascoux atom based on the five-vertex model of Motegi and Sakai and the colored five-vertex model of Brubaker, the first author, Bump, and Gustafsson.
We then modify this model in two different ways to construct a Lascoux polynomial, yielding the first proven combinatorial interpretation of a Lascoux polynomial and atom.
Using this, we prove a conjectured combinatorial interpretation in terms of set-valued tableaux of a Lascoux polynomial and atom due to Pechenik and the second author.
We also prove the Monical's conjectured combinatorial interpretation of the Lascoux atom using set-valued skyline tableaux.
\end{abstract}

\maketitle

\section{Introduction}
\label{sec:intro}

Solvable lattice models are often models for simplified physical systems such as water molecules, but are known to have applications to a diverse number of mathematical fields.
By tuning the Boltzmann weights, special functions can be expressed as the partition function of the lattice model.
Then the Yang--Baxter equation can be used on the model in order to prove relations involving the functions, often simplifying intricate combinatorial or algebraic arguments.
For example, this approach was applied by Kuperberg in counting the number of alternating sign matrices using a six-vertex model~\cite{Kuperberg96}.
Similar techniques have also been used to study probabilistic models such as the (totally) asymmetric simple exclusion process~\cite{Borodin17,CP16,KMO15,KMO16,KMO16II,MS13}.

We will be focusing on the five-vertex model of Motegi and Sakai~\cite{MS13,MS14} (with a gauge transformation on the Boltzmann weights; see Remark~\ref{remark:gauge}), whose partition function is a (symmetric (or stable) $\beta$-)Grothendieck polynomial~\cite{FK94,LS82,LS83}. This was used to establish a Cauchy identity and skew decomposition for Grothendieck polynomials.
In order to define a Grothendieck polynomial $\G_{\lambda}(\zz;\beta)$, we first recall that the Schur polynomial $s_{\lambda}(\zz)$ is the (polynomial) character of the irreducible representation corresponding to the partition $\lambda$ of the special linear Lie algebra $\fsl_n$ (we refer the reader to~\cite{FH91} for more information).
Schur functions also have a geometric interpretation as the cohomology classes of Schubert varieties of the Grassmannian $\Gr(n, k)$, the set of all $k$-dimensional subspaces in $\CC^n$.
In particular, they form a basis for the cohomology ring $H^*\bigl(\Gr(n,k) \bigr)$ when we restrict to partitions that fit inside a $k \times (n-k)$ rectangle, and so $H^*\bigl(\Gr(n,k) \bigr)$ isomorphic to a projection of the ring of symmetric functions.

To improve our understanding of the Grassmannian, we can instead use a generalized cohomology theory such as connective K-theory.
By using the push-forward of the class for any Bott--Samelson resolution of a Schubert variety, we obtain a basis for the connective K-theory ring of the Grassmannian.
This basis can be given in terms of a symmetric polynomials indexed by partitions that fit inside a $k \times (n-k)$ rectangle, and these polynomials are the Grothendieck polynomials.
As such, Grothendieck polynomials are K-theory analogs of Schur functions, which are recovered when setting $\beta = 0$.
Grothendieck polynomials have been well-studied with a combinatorial interpretation using set-valued tableaux and a Littlewood--Richardson rule~\cite{Buch02}.
Recently, a crystal structure, in the sense of Kashiwara~\cite{K90,K91}, was applied to set-valued tableaux~\cite{MPS18}, recovering the expansion into Schur functions originally due to Lenart~\cite{Lenart00}.
Furthermore, a free-fermionic presentation of Grothendieck polynomials was recently given by Iwao~\cite{Iwao19}.
The equivariant K-theory of the Grassmannian was studied using integrable systems by Wheeler and Zinn-Justin~\cite{WZJ16}, yielding a construction of double Grothendieck polynomials.

There is a refinement of Schur functions that are known as key polynomials given in terms of divided difference operators~\cite{Lascoux01}.
Key polynomials are also known as Demazure characters as they can be interpreted as characters of Demazure modules, which also have crystal bases and an explicit combinatorial description~\cite{K93,L95-3} and a geometric construction~\cite{Andersen85,LMS79}.
The K-theory analog of key polynomials are the so-called Lascoux polynomials~\cite{Lascoux01}, which despite recent attention~\cite{Kirillov:notes,Monical16,MPS18,MPS18II,PS19,RY15}, do not have any known geometric or representation theoretic interpretation and have many conjectural combinatorial interpretations~\cite{Kirillov:notes,Monical16,MPS18,PS19,RY15}, some of which are known to be equivalent~\cite{Monical16,MPS18II}.

The goal of this paper is to modify the five-vertex model so that the partition function is a Lascoux polynomial.
To do this, we need an even smaller piece, the Lascoux atom~\cite{Monical16}, which is essentially the new terms that appear when taking a larger Lascoux polynomial and has a description in terms of divided difference operators.
On the solvable lattice model side, we employ the idea of Borodin and Wheeler of using a \emph{colored} lattice model~\cite{BorodinWheelerColored}, where one can then study the atoms of special functions.
Borodin and Wheeler used a colored vertex model to study nonsymmetric spin Hall--Littlewood polynomials and nonsymmetric Macdonald polynomials~\cite{BorodinWheelernsMac}.
Brubaker, Bump, the first author, and Gustafsson studied Iwahori (and parahoric) Whittaker functions on $p$-adic groups\footnote{Here Iwahori Whittaker functions should be considered as atoms for the spherical Whittaker function; the latter is given, up to a quantum factor, by a Schur polynomial by the work of Shintani, Casselman, and Shalika} using colored lattice models~\cite{BBBGIwahori}.
By modifying the colored five-vertex by these same authors~\cite{BBBG19}, our main result is the construction of an integrable colored five-vertex model based on the Motegi--Sakai five-vertex model whose partition function is a Lascoux atom.
Then by a suitable modification of our model, we obtain a Lascoux polynomial.
In fact, we provide two such modifications and show they are naturally in bijection.

As an application, we prove~\cite[Conj.~6.1]{PS19}, thus establishing the first combinatorial interpretation of Lascoux polynomials and atoms by using a notion of a Key tableau of a set-valued tableau.
We do this by refining the bijection between Gelfand--Tsetlin patterns and states of our five-vertex model to allow marking certain places as in~\cite{MPS18} in order to obtain a bijection with set-valued tableaux.
However, in order to make this weight preserving, we need to also twist by the Lusztig involution~\cite{Lenart07} (an action of the long element of the symmetric group), which requires the crystal structure on set-valued tableaux established in~\cite{MPS18}.
Another application is proving the conjectured combinatorial interpretation of~\cite[Conj.~5.2]{Monical16}.
We do this by noting our model is naturally in bijection with reverse set-valued tableaux, noting the bijection from~\cite[Thm.~2.4]{Monical16} is governed by the semistandard case of~\cite{Mason08} and adding the so-called free entries (which are just markings on certain vertices the state), and using that the semistandard case is known to give Demazure atoms~\cite{Mason08,Mason09}.

This paper is organized as follows.
In Section~\ref{sec:background}, we provide the necessary background on tableaux combinatorics, Grothendieck and Lascoux polynomials, and lattice models. 
In Section~\ref{sec:coloredmodels} we introduce a new colored lattice model and prove by using a Yang--Baxter equation, that its partition function is equal to a Lascoux atom. 
In Section~\ref{sec:proofofconjecture} we prove~\cite[Conj.~6.1]{PS19} and~\cite[Conj.~5.2]{Monical16} by using our main result.

\subsection*{Acknowledgments}

VB would like to thank Ben Brubaker, Daniel Bump, and Henrik Gustafsson for continuous useful discussion. 
TS would like to thank Kohei Motegi and Kazumitsu Sakai for invaluable discussions and explanations on their papers, in particular the Boltzmann weights of the five-vertex model that we use here.
TS would also like to thank Cara Monical, Tomoo Matsumura, Oliver Pechenik, and Shogo Sugimoto for useful discussions.
KW would like to thank Ben Brubaker for useful discussion and inspiration.
The authors thank Oliver Pechenik for comments on an earlier draft of this manuscript.
The authors thank the anonymous referees for their helpful comments.
This paper benefited from computations using \textsc{SageMath}~\cite{sage}.

VB was partially supported by the Australian Research Council DP180103150.
TS was partially supported by the Australian Research Council DP170102648.

\section{Background}
\label{sec:background}

Fix a positive integer $n$.
Let $\zz = (z_1, z_2, \dotsc, z_n)$ be a finite number of indeterminates.
For any sequence $(\alpha_1, \dotsc, \alpha_n) \in \ZZ^n$ of length $n$, denote $\zz^{\alpha} := z_1^{\alpha_1} z_2^{\alpha_2} \cdots z_n^{\alpha_n}$.
Let $\sym{n}$ denote the symmetric group on $n$ elements with simple transpositions $(s_1, s_2, \dotsc, s_{n-1})$.
For some $w \in \sym{n}$, let $\ell(w)$ denote the length of $w$: the minimal number of simple transpositions whose product equals $w$.
We denote by $w_0$ the longest element in  $S_n$. 
Let $\leq$ denote the (strong) Bruhat order on $S_n$.
For more information on the symmetric group, we refer the reader to~\cite{Sagan01}.
For any $w \in \sym{n}$, define $w\zz = (z_{w(1)}, z_{w(2)}, \dotsc, z_{w(n)})$.
Let $\lambda = (\lambda_1, \lambda_2, \dotsc, \lambda_n)$ be a \defn{partition}, a sequence of weakly decreasing nonnegative integers (of length $n$).
Let $\ell(\lambda) = \max \{k \mid \lambda_k > 0 \}$ denote the \defn{length} of $\lambda$.
The Young diagram (in English convention) of $\lambda$ is a drawing consisting of stacks of boxes with row $i$ having $\lambda_i$ boxes pushed into the upper-left corner.

\subsection{Tableaux combinatorics}

A \defn{semistandard (Young) tableau of shape $\lambda$} is a filling of the boxes of the Young diagram of $\lambda$ with positive integers such that the values are weakly increasing across rows and strictly increasing down columns.
Let $\ssyt^n(\lambda)$ denote the set of semistandard Young tableaux of shape $\lambda$ and maximum entry $n$.
A \defn{(semistandard) set-valued tableau of shape $\lambda$} is similar except we fill the boxes with finite non-empty sets of positive integers that satisfy
\[
\ytableausetup{boxsize=1.5em}
\ytableaushort{XY,Z}\ \ \text{ implies } \max X \leq \min Y \text{ and } \max X < \min Z.
\]
Let $\svt^n(\lambda)$ denote the set of all set-valued tableaux of shape $\lambda$ such that the maximum integer appearing is $n$.
We equate a set-valued tableau where every entry has size $1$ with the semistandard Young tableau obtained by forgetting the entry is a set.

Define the \defn{weight} of a set-valued tableau $T \in \svt^n(\lambda)$ to be
\[
\wt(T) := \bigl( \#\{ X \in T \mid i \in X \} \bigr)_{i=1}^n;
\]
in particular, the exponent of $z_i$ in $\zz^{\wt(T)}$ counts the number of times that $i$ occurs in (an entry of) $T$.
We also require the \defn{excess} statistic:
\[
\excess(T) := \sum_{X \in T} \bigl( |X| - 1 \bigr),
\]
which counts how far the set-valued tableau $T$ is from being a semistandard Young tableau (\textit{i.e.}, $T$ is a semistandard Young tableau if and only if $\excess(T) = 0$).

A semistandard Young tableau is called a \defn{key tableau} if the entries of column $i + 1$ are a subset of the entries of column $i$ for all $1 \leq i < \lambda_1$.
We define a left $\sym{n}$-action on key tableau $K$ with maximum entry $n$ by applying $w \in \sym{n}$ to each entry of $K$ and sorting columns to be strictly increasing.
Let $K_{w\lambda}$ denote the key tableau by applying $w$ to the key tableau of shape $\lambda$ with every entry of row $i$ filled by $i$.
Note that this agrees with $\sym{n}$ considered as the Weyl group acting on the crystal of $\ssyt^n(\lambda)$ (we refer the reader to~\cite{BS17} for more details).

Let $T$ be a set-valued tableau.
For a semistandard Young tableau $S$, let $k(S)$ denote the (right) key tableau associated to $S$ (see, \textit{e.g.},~\cite{Willis13,BBBG19} for algorithms to compute this).
Let $\min(T)$ denote the semistandard Young tableau formed by taking the minimum of each entry in $T$.
Let $T^*$ denote the \defn{Lusztig involution} on $T$ using the crystal structure from~\cite{MPS18}.
We do not require the exact definition of the Lusztig involution, only that $\wt(T^*) = w_0 \wt(T)$.
From~\cite[Sec.~6]{PS19}, we define the \defn{(right) Key tableau} of $T$ to be
\[
K(T) := k\bigl( \min(T^*)^* \bigr).
\]

\begin{example}
Let $\lambda = (4,2,1)$ and $n = 3$. Consider the tableau
\[
T = \ytableaushort{111{2,\!3},22,3}\,,
\]
then computing $K(T)$, we have
\[
T^* = \ytableaushort{11{2,\!3}3,22,3}\,,
\qquad
\min(T^*) = \ytableaushort{1123,22,3}\,,
\qquad
\min(T^*)^* = \ytableaushort{1122,23,3}\,,
\]
for which we then compute the key (for semistandard Young tableaux) to obtain the Key
\[
K(T) = k\bigl(\min(T^*)^* \bigr) = \ytableaushort{1222,23,3}\,.
\]
\end{example}

Now we recall the definition of a \defn{marked Gelfand--Tsetlin (GT) pattern} and the bijection with set-valued tableaux from~\cite[Sec.~4]{MPS18}.
Indeed, a marked GT pattern is a sequence of partitions $\Lambda = (\lambda^{(j)})_{j=0}^n$, called a \defn{Gelfand--Tsetlin (GT) pattern}, such that $\lambda^{(0)} = \emptyset$ and the skew shape $\lambda^{(j)} / \lambda^{(j-1)}$ does not contain a vertical domino (\textit{i.e.}, is a horizontal strip),\footnote{This is equivalent to the usual interlacing condition on GT patterns.} with a set $M$ of entries that are ``marked,'' where the entry $(i, j)$, for $2 \leq j \leq n$ and $1 \leq i < \ell(\lambda^{(j)})$, is allowed to be marked if and only if $\lambda^{(j)}_{i+1} < \lambda^{(j-1)}_i$.
In particular, an entry $(i, j)$ cannot be marked if the entry to the right equals the entry to the southeast.
We depict a marked GT pattern as a triangular array with the top-row corresponding to $\lambda^{(n)}$ and the bottom row $\lambda^{(1)}$ and a marked entry $(i,j)$ as a box around the entry $\lambda^{(j)}_i$.

Next, we recall the bijection $\phi$ between marked GT patterns and set-valued tableaux, which is defined recursively as follows.
Consider a marked GT pattern $(\Lambda, M)$.
Start with $T_0 = \emptyset$.
Suppose we are at step $j$, where the set-valued tableau is $T_{j-1}$ that has entries in $1, \dotsc, j-1$.
For each marked entry $(i,j)$, we add $j$ to the rightmost entry of $i$-th row of $T_{j-1}$, and denote this $T'_j$.
Then we consider the horizontal strip $\lambda^{(j)} / \lambda^{(j-1)}$ with all entries being $\{j\}$, which we add to $T'_j$ to obtain a set-valued tableau $T_j$ of shape $\lambda^{(j)}$.
We repeat this for every row of $\Lambda$ and the result is $\phi(\Lambda, M)$.
We define the weight of a marked GT pattern $\wt(\Lambda, M) = \wt\bigl( \phi(\Lambda, M) \bigr)$.

\begin{example}
We depict the marked entries $M$ in a marked GT pattern $(\Lambda, M)$ by boxing the marked entries and we underline entries not allowed to be marked.
Consider $\lambda = (7, 4, 4)$ and $n = 4$. Then an example of a marked GT pattern with top row $\lambda$ and the corresponding set-valued tableau under $\phi$ is
\[
(\Lambda, M) =
\begin{array}{ccccccc}
\boxed{7} && \underline{5} && 4 && \underline{0} \\[4pt]
& 6 && \boxed{4} && \underline{2} \\[4pt]
&& \boxed{6} && \underline{4} \\[4pt]
&&& \underline{6}
\end{array},
\quad
\ytableausetup{boxsize=2.2em}
\phi(\Lambda, M) = 
\ytableaushort{11111{1,\!2,\!4}4,222{2,\!3},3{3,\!4}44}\,.
\]
\end{example}

We also require one additional combinatorial object from~\cite{Monical16}, where we use the description given in~\cite{MPS18II} but converted to use English convention.
For a permutation $w \in \sym{n}$, define the \defn{(semistandard) skyline diagram} $w\lambda$ to be the Young diagram of $\lambda$ but with the rows permuted by $w$.
In particular, we have $w \lambda = (\lambda_{w(1)}, \dotsc, \lambda_{w(n)})$.
A \defn{set-valued skyline tableau of shape $w\lambda$} is a filling of a skyline diagram $w\lambda$ with finite nonempty sets of positive integers that satisfy the following conditions. Call the largest entry in a box the \defn{anchor} and the other entries \defn{free}.
\begin{enumerate}
\item Entries do not repeat in a column.
\item \label{weak_increase} Rows weakly decrease in the sense of if $B$ is to the left of $A$, then $\min B \geq \max A$.
\item For every triple of boxes of the form
\[
\begin{array}{c@{\hspace{40pt}}c}
\begin{array}{|c|c|}
\cline{1-2}
\raisebox{-2pt}{$B$} & \raisebox{-2pt}{$A$}
\\ \cline{1-2} \multicolumn{1}{c}{} & \multicolumn{1}{c}{\vdots}
\\ \cline{2-2}
\multicolumn{1}{c|}{} & \raisebox{-2pt}{$C$}
\\ \cline{2-2}
\end{array}
&
\begin{array}{|c|c|}
\cline{1-1}
\raisebox{-2pt}{$C$}
\\ \cline{1-1} \multicolumn{1}{c}{\vdots}
\\ \cline{1-2}
\raisebox{-2pt}{$B$} & \raisebox{-2pt}{$A$}
\\ \cline{1-2}
\end{array}
\\[25pt]
\text{upper row weakly longer}
&
\text{lower row strictly longer}
\end{array}
\]
the anchors $a,b,c$ of $A,B,C$, respectively, must satisfy either $c < a$ or $b < c$.\footnote{Such triples were originally called \defn{inversion triples} and required to satisfy $c < a \leq b$ or $a \leq b < c$, but in our case $a \leq b$ is immediate by~(\ref{weak_increase}).}
\item \label{floating_free} Every free entry is in the cell of the least anchor in its column such that~(\ref{weak_increase}) is not violated.
\item \label{anchors} Anchors in the first column equal their row index.
\end{enumerate}
Let $\skyline(w\lambda)$ denote the set of set-valued skyline tableaux of shape $w\lambda$.
We define the weight and excess for a set-valued skyline tableau the same way as for a set-valued tableau.

\begin{example}
The set-valued skyline tableaux in the set $\skyline(s_1 s_2 \lambda)$ for $\lambda = (2, 4, 1)$ are
\[
\ytableausetup{boxsize=1.5em}
\begin{array}{c@{\quad}c@{\quad}c@{\quad}c@{\quad}c}
\ytableaushort{1,2{2,\!1}11,33}\,,
&
\ytableaushort{1,22{2,\!1}1,3{3,\!1}}\,,
&
\ytableaushort{1,22{2,\!1}1,33}\,,
&
\ytableaushort{1,222{2,\!1},3{3,\!1}}\,,
&
\ytableaushort{1,222{2,\!1},33}\,,
\\[30pt]
\ytableaushort{1,2211,33}\,,
&
\ytableaushort{1,2221,3{3,\!1}}\,,
&
\ytableaushort{1,2221,33}\,,
&
\ytableaushort{1,2222,3{3,\!1}}\,,
&
\ytableaushort{1,2222,33}\,.
\end{array}
\]
\end{example}

\subsection{Symmetric Grothendieck polynomials and Lascoux polynomials}

For the remainder of this section, we will consider $\lambda$ to always be a partition of length $n$, but possibly with some entries being $0$.

The classical definition of a \defn{Schur polynomial} is given as a ratio of determinants:
\[
s_{\lambda}(\zz) = \dfrac{\det\bigl( z_j^{\lambda_i+n-i} \bigr)_{i,j=1}^n}{\prod_{1\leq i < j \leq n} z_i - z_j},
\]
where the denominator is the Vandermonde determinant (equivalently, the determinant in the numerator when $\lambda = \emptyset$).
Ikeda and Naruse~\cite{IN13} gave a similar definition for the \defn{(symmetric)\footnote{These are also known as \defn{stable Grothendieck polynomials} as they are the stable limits $n \to \infty$ of Grothendieck polynomials and then restricted to a finite number of variables when $\beta = -1$~\cite{LS82,LS83}.} Grothendieck polynomial}:
\begin{equation}
\label{eq:determinant_formula}
\G_{\lambda}(\zz; \beta) = \dfrac{\det\bigl( z_j^{\lambda_i+n-i}(1-\beta z_i)^{j-1} \bigr)_{i,j=1}^n}{\prod_{1\leq i < j \leq n} z_i - z_j}.
\end{equation}
Note that we have $\G_{\lambda}(\zz; 0) = s_{\lambda}(\zz)$.
There is a combinatorial interpretation of Grothendieck polynomials due to Buch~\cite[Thm.~3.1]{Buch02} as the generating function of semistandard set-valued tableaux:
\[
\G_{\lambda}(\zz; \beta) = \sum_{T \in \svt^n(\lambda)} \beta^{\excess(T)} \zz^{\wt(T)}.
\]
Using $\phi$, we can give another combinatorial interpretation of a Grothendieck polynomial as the generating function over marked GT patterns~\cite[Prop.~4.5]{MPS18}.

Finally, we will consider another algebraic definition of the Grothendieck polynomials.
The \defn{Demazure--Lascoux operator} $\varpi_i$ defines an action of the $0$-Hecke algebra on $\ZZ[\beta][\zz]$ given by 
\[
\varpi_i f(\zz; \beta) = \frac{(z_i  + \beta z_i z_{i+1}) f(\zz; \beta) -  (z_{i+1}  + \beta z_i z_{i+1}) f(s_i \zz; \beta)}{z_i - z_{i+1}}.
\] 
In particular, the Demazure--Lascoux operators satisfy the relations:
\begin{subequations}
\label{eq:0Hecke}
\begin{align}
\label{eq:DL_braid} \varpi_i \varpi_j &= \varpi_j \varpi_i \hspace{1.5cm} \text{for $|i-j| > 1$}, \\
\label{eq:DL_comm} \varpi_i \varpi_{i+1} \varpi_i &= \varpi_{i+1} \varpi_i \varpi_{i+1}, \\
\label{eq:DL_0Hecke} \varpi_i^2 & = \varpi_i.
\end{align}
\end{subequations}
Hence, for any permutation $w \in \sym{n}$, one may define $\varpi_w := \varpi_{i_1} \varpi_{i_2} \dotsm \varpi_{i_{\ell}}$ for any choice of reduced expression $w = s_{i_1} s_{i_2} \dotsm s_{i_{\ell}}$.
We define the \defn{Lascoux polynomials}~\cite{Lascoux01} by
\[
L_{w\lambda}(\zz; \beta) := \varpi_w \zz^{\lambda},
\]
and we have $\G_{\lambda}(\zz; \beta) = L_{w_0\lambda}(\zz; \beta)$~\cite{IN13,IS14,LS82}.
It is an open problem to find a geometric or representation-theoretic interpretation for general Lascoux polynomials.

Next, following~\cite{Monical16}, we define the \defn{Demazure--Lascoux atom operator} $\overline{\varpi}_i := \varpi_i - 1$, which satisfies the relations~\eqref{eq:0Hecke} except $\overline{\varpi}_i^2 = -\overline{\varpi}_i$ instead of~\eqref{eq:DL_0Hecke}.
These operators are used to define the \defn{Lascoux atoms}:
\[
\overline{L}_{w\lambda}(\zz; \beta) := \overline{\varpi}_w \zz^{\lambda},
\]
which then satisfy~\cite[Thm.~5.1]{Monical16}
\begin{equation}
\label{eq:Lascoux_into_atoms}
L_{w\lambda}(\zz; \beta) = \sum_{u \leq w} \overline{L}_{u\lambda}(\zz; \beta).
\end{equation}
When $\beta = 0$, Lascoux and Sch\"utzenberger~\cite{LS90} showed that
\[
\overline{L}_{w\lambda}(\zz; 0) = \sum_{\substack{T \in \svt^n(\lambda) \\ K(T) = K_{w\lambda}}} \zz^{\wt(T)},
\qquad\qquad
L_{w\lambda}(\zz; 0) = \sum_{\substack{T \in \svt^n(\lambda) \\ K(T) \leq K_{w\lambda}}} \zz^{\wt(T)},
\]
where the comparison $K(T) \leq K_{w\lambda}$ is done entrywise.
It is straightforward to see that $K(T) \leq K_{w\lambda}$ is equivalent to saying there exists a $u \leq w$ such that $K(T) = K_{u\lambda}$.

\begin{conjecture}[{\cite[Conj.~6.1]{PS19}}]
\label{conj:svt_Lascoux}
We have
\[
\overline{L}_{w\lambda}(\zz; \beta) = \sum_{\substack{T \in \svt^n(\lambda) \\ K(T) = K_{w\lambda}}} \beta^{\excess(T)} \zz^{\wt(T)},
\qquad\qquad
L_{w\lambda}(\zz; \beta) = \sum_{\substack{T \in \svt^n(\lambda) \\ K(T) \leq K_{w\lambda}}} \beta^{\excess(T)} \zz^{\wt(T)}.
\]
\end{conjecture}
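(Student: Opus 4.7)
The plan is to reduce Conjecture~\ref{conj:svt_Lascoux} to the atom case and then to match the atom statement with the partition function of the colored five-vertex model that will be constructed in Section~\ref{sec:coloredmodels}. The polynomial statement follows from the atom statement via~\eqref{eq:Lascoux_into_atoms}: if the atom identity holds, then summing over $u \leq w$ gives
\[
L_{w\lambda}(\zz;\beta) = \sum_{u \leq w} \overline{L}_{u\lambda}(\zz;\beta) = \sum_{u \leq w} \sum_{\substack{T \in \svt^n(\lambda) \\ K(T) = K_{u\lambda}}} \beta^{\excess(T)} \zz^{\wt(T)},
\]
and by the remark following~\eqref{eq:Lascoux_into_atoms} the inner double sum is precisely the sum over $T$ with $K(T) \leq K_{w\lambda}$ entrywise. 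So the problem is entirely about the atom identity.

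For the atom identity, the plan is to use the colored five-vertex model to be constructed in Section~\ref{sec:coloredmodels}, whose partition function is $\overline{L}_{w\lambda}(\zz;\beta)$, and then exhibit a weight-preserving bijection from the set of admissible states to $\{T \in \svt^n(\lambda) : K(T) = K_{w\lambda}\}$. To do this, I would first forget the colors and recover the uncolored five-vertex model of Motegi--Sakai, which (following~\cite{MPS18}) is naturally in bijection with marked GT patterns via $\phi$: rows of the lattice correspond to the partitions $\lambda^{(j)}$ of the GT pattern, horizontal strips encode which cells receive a new value $j$, and the additional ``marking'' vertex data records exactly the marks in $M$. Composing with the recursive bijection $\phi$ recalled above gives a bijection with $\svt^n(\lambda)$ that is weight-preserving with respect to $\wt$ and under which the number of marks is tracked by $\excess$, so the total Boltzmann weight matches $\beta^{\excess(T)}\zz^{\wt(T)}$.

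The next step is to track the color data. On the vertex-model side, the colors label the horizontal lines and their positions across the rows record, for each row $j$, which column of the skyline shape each entry $j$ ends up in. The goal is to show that this record determines, via $\phi$, precisely the key tableau of the image $T$. The correct statement is cleanest after twisting by the Lusztig involution: I would show that if a state $s$ maps to the marked pattern $(\Lambda,M)$ and hence to the set-valued tableau $T = \phi(\Lambda,M)$, then the color configuration of $s$ encodes the right key of $\min(T^*)^*$, \textit{i.e.} $K(T)$. To pin this down I would combine two ingredients: the crystal-theoretic description of $K(T)$ via Lusztig involution established in~\cite{MPS18,PS19} (used only through the fact that $\wt(T^*) = w_0\wt(T)$ and that $K(T)$ depends only on the crystal class), and the tracking of colors through the $R$-matrix that proved integrability of the model. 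Concretely, one argues by induction on $\ell(w)$ that applying $\overline{\varpi}_i$ corresponds to inserting a colored $R$-vertex that permutes the two relevant colors, and that this operation is compatible with the $s_i$-action on key tableaux; the base case $w = e$ reduces to the statement that the unique state of the model with coloring $K_\lambda$ corresponds under $\phi$ to tableaux $T$ with $K(T) = K_\lambda$.

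The main obstacle I expect is this color-to-key matching: the definition of $K(T)$ goes through the Lusztig involution, which is a nontrivial crystal-theoretic operation, whereas the color data on a state is defined locally in terms of the vertex configurations. Verifying that these coincide (as opposed to some other natural ``key-like'' statistic) likely requires carefully comparing the action of $\varpi_i$ on a Lascoux atom (giving $\overline{L}_{s_i w\lambda}$ or $-\overline{L}_{w\lambda}$ depending on a Bruhat condition) against the corresponding action on colored states, and then invoking the bijection of~\cite{MPS18} to transport the statement back to set-valued tableaux. Once this compatibility is proven on a small generating set (one $\overline{\varpi}_i$ at a time), the braid relations~\eqref{eq:DL_braid}--\eqref{eq:DL_comm} and the idempotent-with-sign relation $\overline{\varpi}_i^2 = -\overline{\varpi}_i$ promote it to arbitrary $w$, completing the bijection and hence the proof.
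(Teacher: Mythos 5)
Your plan follows the paper's proof of Theorem~\ref{thm:key_theorem} in all essentials: the atom identity is obtained from the colored model of Theorem~\ref{thm:5vertex_atoms} together with the marked-GT-pattern bijection and a twist by the Lusztig involution, and the polynomial identity follows from the atom identity via~\eqref{eq:Lascoux_into_atoms} (the paper notes this route as well as a direct one using $\states_{\lambda,w}$). The one place where you propose more work than is needed is the ``color-to-key matching'' that you flag as the main obstacle: the paper does not run a fresh induction on $\ell(w)$ tracking colors through $R$-matrices. Instead, it observes that forgetting the markings of a marked state corresponds, after conjugating by the Lusztig involution, to passing from $T$ to $\min(T^*)^*$, and that $K(T) = k\bigl(\min(T^*)^*\bigr)$ by definition depends only on this unmarked datum; hence Equation~\eqref{eq:marked_weight} reduces the whole claim to the statement that the unmarked states of $\overline{\states}_{\lambda,w}$ correspond, under the Lusztig-twisted bijection, exactly to the semistandard tableaux with right key $K_{w\lambda}$, which is the already-known $\beta = 0$ (Demazure atom) case of Theorem~\ref{thm:5vertex_atoms} combined with the Lascoux--Sch\"utzenberger description~\cite{LS90}. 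In other words, the double Lusztig twist in the definition of $K(T)$ is engineered precisely so that the condition $K(T) = K_{w\lambda}$ is a condition on the underlying unmarked state, and no further compatibility of the colors with $\overline{\varpi}_i$ beyond what was already established in proving Theorem~\ref{thm:5vertex_atoms} needs to be checked.
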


The following conjecture is also the K-theoretic analog of Demazure characters being described by skyline tableaux~\cite{Mason08,Mason09}, which come from nonsymmetric Macdonald polynomials at $t = q = 0$.

\begin{conjecture}[{\cite[Conj.~5.2]{Monical16}}]
\label{conj:skyline_tableaux}
We have
\[
\overline{L}_{w\lambda} = \sum_{S \in \skyline(w\lambda)} \beta^{\excess(S)} \zz^{\wt(S)}.
\]
\end{conjecture}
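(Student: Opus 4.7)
The plan is to deduce Conjecture~\ref{conj:skyline_tableaux} from the main theorem of Section~\ref{sec:coloredmodels}, which identifies $\overline{L}_{w\lambda}(\zz;\beta)$ with the partition function of our colored five-vertex model. The goal, then, is to exhibit an explicit weight- and excess-preserving bijection between the admissible states of the model and the set $\skyline(w\lambda)$.

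First, I would describe the canonical bijection between states of the colored five-vertex model and \emph{reverse} set-valued tableaux. The boundary conditions of the model, together with the colors of the rightward-moving strands through each column, determine the shape and the entries of a reverse set-valued tableau: the color of a strand tracks which row (i.e., which key entry) its value originated from, and markings on certain vertices of the state correspond exactly to additional free entries sitting with an anchor in a box. In the unmarked / no-excess case this reduces to the usual bijection between Gelfand--Tsetlin patterns and semistandard reverse tableaux used in Section~\ref{sec:background}.

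Next, I would invoke the bijection of \cite[Thm.~2.4]{Monical16}, which sends reverse semistandard tableaux of shape $\lambda$ with right key equal to $K_{w\lambda}$ to semistandard skyline tableaux of shape $w\lambda$. This map is essentially Mason's bijection~\cite{Mason08}, and Mason--Remmel \cite{Mason09} already established that it produces the generating function for the Demazure atom, which settles the $\beta=0$ specialization. To lift to the K-theoretic setting, I would argue that each free entry in the reverse set-valued tableau travels together with the anchor it rests upon under Mason's bijection, becoming a free entry in the corresponding box of the resulting skyline tableau. One must then check that these free entries land in precisely the cells dictated by condition~(\ref{floating_free}) of the skyline tableau definition, namely the cell of the least anchor in the column such that~(\ref{weak_increase}) remains valid, and that inversion-triple condition~(3) is preserved.

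The hard part will be this final verification: namely, compatibility of Mason's column-by-column insertion with the additional markings. The argument should be local, tracing how a free entry is carried from column to column as Mason's algorithm repositions anchors, and matching each admissible configuration of markings in the lattice model with an admissible placement of free entries. Because the vertex admissibility conditions in our colored five-vertex model mirror the inequalities forbidding inversion triples and the ``least anchor'' rule on the skyline side, I expect this local correspondence to hold. Once it is in place, composing the two bijections yields a weight- and excess-preserving map from states of the model to $\skyline(w\lambda)$, and combining this with the identification of the partition function as $\overline{L}_{w\lambda}(\zz;\beta)$ completes the proof.
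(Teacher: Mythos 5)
Your proposal follows essentially the same route as the paper's proof: identify marked states of $\overline{\states}_{\lambda,w}$ with reverse set-valued tableaux (markings becoming free entries), then compose with Mason's bijection as extended to the set-valued setting. The only real difference is that the ``hard part'' you leave open --- verifying that free entries are carried to the correct cells and that the inversion-triple and least-anchor conditions are preserved --- is precisely the content of \cite[Thm.~2.4]{Monical16}, which you already cite; the paper simply invokes that theorem rather than re-deriving the local compatibility.
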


We note that if $\varpi_i f = f$, then $\overline{\varpi}_i f = \varpi_i f - f = f - f = 0$.
Hence if there exists an $i$ such that $\ell(ws_i) = \ell(w) - 1$ and $\varpi_i \zz^{\lambda} = \zz^{\lambda}$, then $\overline{\varpi}_w \zz^{\lambda} = 0$.
This condition on $w$ is equivalent to there being a $u$ such that $\ell(u) < \ell(w)$ and $u \lambda = w \lambda$.
The elements $w$ of minimal length such that $w\lambda \neq \lambda$ are the \defn{minimal length coset representatives} of $\sym{n} / \stab(\lambda)$, where $\stab(\lambda) = \{ w \in \sym{n} \mid w\lambda = \lambda \}$ is the stabilizer of $\lambda$ (for more information, see standard texts such as~\cite{BB05,Humphreys90}).

\subsection{Uncolored models}
\label{sec:uncolored_model}

Now we give another interpretation of the Grothendieck polynomial $\G_{\lambda}$ using integrable systems due to Motegi and Sakai~\cite[Lemma~5.2]{MS13}.
We first equate $\lambda$ with a $\{0,1\}$-sequence of length $m$ by considering the Young diagram inside of an $n \times (m - n)$ rectangle and starting at the bottom left, each up step we write a $1$ and each right step we write a $0$.
Note that the positions of the $1$'s are at $\lambda_i + i$ for all $1 \leq i \leq n$.
For example, with $\lambda = 522100$ (so $n = 6$) with $m = 14$, the corresponding $\{0,1\}$-sequence is $11010110001000$.

\begin{figure}
\[
\begin{array}{ccccc}
\toprule
  \tt{a}_1&\tt{a}_2&\tt{b}_1&\tt{b}_2&\tt{c}_1\\
\midrule
\begin{tikzpicture}
\coordinate (a) at (-.75, 0);
\coordinate (b) at (0, .75);
\coordinate (c) at (.75, 0);
\coordinate (d) at (0, -.75);
\coordinate (aa) at (-.75,.5);
\coordinate (cc) at (.75,.5);
\draw (a)--(0,0);
\draw (b)--(0,0);
\draw (c)--(0,0);
\draw (d)--(0,0);
\draw[fill=white] (a) circle (.25);
\draw[fill=white] (b) circle (.25);
\draw[fill=white] (c) circle (.25);
\draw[fill=white] (d) circle (.25);
\node at (0,1) { };
\node at (a) {$0$};
\node at (b) {$0$};
\node at (c) {$0$};
\node at (d) {$0$};
\path[fill=white] (0,0) circle (.2);
\node at (0,0) {$z$};
\end{tikzpicture} &
\begin{tikzpicture}
\coordinate (a) at (-.75, 0);
\coordinate (b) at (0, .75);
\coordinate (c) at (.75, 0);
\coordinate (d) at (0, -.75);
\coordinate (aa) at (-.75,.5);
\coordinate (cc) at (.75,.5);
\draw (a)--(0,0);
\draw (b)--(0,0);
\draw[line width=0.5mm] (c)--(0,0);
\draw[line width=0.5mm] (d)--(0,0);
\draw[fill=white] (a) circle (.25);
\draw[fill=white] (b) circle (.25);
\draw[fill=white] (c) circle (.25);
\draw[fill=white] (d) circle (.25);
\node at (0,1) { };
\node at (a) {$0$};
\node at (b) {$0$};
\node at (c) {$1$};
\node at (d) {$1$};
\path[fill=white] (0,0) circle (.2);
\node at (0,0) {$z$};
\end{tikzpicture}&
\begin{tikzpicture}
\coordinate (a) at (-.75, 0);
\coordinate (b) at (0, .75);
\coordinate (c) at (.75, 0);
\coordinate (d) at (0, -.75);
\coordinate (aa) at (-.75,.5);
\coordinate (cc) at (.75,.5);
\draw[line width=0.5mm] (a)--(0,0);
\draw[line width=0.5mm] (b)--(0,0);
\draw[line width=0.5mm] (c)--(0,0);
\draw[line width=0.5mm] (d)--(0,0);
\draw[fill=white] (a) circle (.25);
\draw[fill=white] (b) circle (.25);
\draw[fill=white] (c) circle (.25);
\draw[fill=white] (d) circle (.25);
\node at (0,1) { };
\node at (a) {$1$};
\node at (b) {$1$};
\node at (c) {$1$};
\node at (d) {$1$};
\path[fill=white] (0,0) circle (.2);
\node at (0,0) {$z$};
\end{tikzpicture}&
\begin{tikzpicture}
\coordinate (a) at (-.75, 0);
\coordinate (b) at (0, .75);
\coordinate (c) at (.75, 0);
\coordinate (d) at (0, -.75);
\coordinate (aa) at (-.75,.5);
\coordinate (cc) at (.75,.5);
\draw[line width=0.5mm] (a)--(0,0);
\draw (b)--(0,0);
\draw[line width=0.5mm] (c)--(0,0);
\draw (d)--(0,0);
\draw[fill=white] (a) circle (.25);
\draw[fill=white] (b) circle (.25);
\draw[fill=white] (c) circle (.25);
\draw[fill=white] (d) circle (.25);
\node at (0,1) { };
\node at (a) {$1$};
\node at (b) {$0$};
\node at (c) {$1$};
\node at (d) {$0$};
\path[fill=white] (0,0) circle (.2);
\node at (0,0) {$z$};
\end{tikzpicture}&
\begin{tikzpicture}
\coordinate (a) at (-.75, 0);
\coordinate (b) at (0, .75);
\coordinate (c) at (.75, 0);
\coordinate (d) at (0, -.75);
\coordinate (aa) at (-.75,.5);
\coordinate (cc) at (.75,.5);
\draw[line width=0.5mm] (a)--(0,0);
\draw[line width=0.5mm] (b)--(0,0);
\draw (c)--(0,0);
\draw (d)--(0,0);
\draw[fill=white] (a) circle (.25);
\draw[fill=white] (b) circle (.25);
\draw[fill=white] (c) circle (.25);
\draw[fill=white] (d) circle (.25);
\node at (0,1) { };
\node at (a) {$1$};
\node at (b) {$1$};
\node at (c) {$0$};
\node at (d) {$0$};
\path[fill=white] (0,0) circle (.2);
\node at (0,0) {$z$};
\end{tikzpicture} \\
\midrule
  1&1+\beta z&1&z&1\\
\bottomrule
\end{array}\]
\caption{Boltzmann weights of the uncolored model.}
\label{fig:uncolored_wts}
\end{figure}

Next, we consider a rectangular grid with $n$ horizontal lines and $m$ vertical lines and to each (half) edge (we consider a crossing of the lines to be vertices), we assign either a $0$ or a $1$.
For the \defn{(lattice) model} we will be considering, we fix the left (half) edges to all have label by $1$, the right and bottom (half) edges all being labeled by $0$, and the top (half) edges given by the $\{0,1\}$-sequence corresponding to $\lambda$.
A \defn{state} is an assignment of $\{0,1\}$ labels to all of the remaining edges of the model.
We call a state \defn{admissible} if all of the local configurations around each vertex are one of the configurations given by Figure~\ref{fig:uncolored_wts}, each of which has a \defn{Boltzmann weight}.
Let $\states_{\lambda}$ denote the set of all possible admissible states of the model.
The \defn{(Boltzmann) weight} $\wt(S)$ of an admissible state $S \in \states_{\lambda}$ is the product of all of the Boltzmann weights of all vertices with $z = z_i$ in the $i$-th row numbered starting from top.
We consider the Boltzmann weight of an inadmissible state to be $0$.
The \defn{partition function} of a model $\model$ (\textit{i.e.}, a set of (admissible) states)
\[
Z(\model; \zz; \beta) := \sum_{S \in \model} \wt(S)
\]
is the sum of the Boltzmann weights of all possible (admissible) states of $\model$.

\begin{theorem}[{\cite[Lemma~5.2]{MS13}}]
We have
\[
\G_{\lambda}(\zz; \beta) = Z(\states_{\lambda}; \zz; \beta).
\]
\end{theorem}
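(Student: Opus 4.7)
My plan is to prove the theorem by exhibiting a weight-preserving bijection between admissible states in $\states_{\lambda}$ and marked Gelfand--Tsetlin patterns with top row $\lambda$, then invoking the combinatorial formula~\cite[Prop.~4.5]{MPS18} expressing $\G_{\lambda}(\zz;\beta)$ as the generating function over such marked patterns.

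To each admissible state $S \in \states_{\lambda}$, I associate a chain of partitions $\lambda^{(0)} = \emptyset \subseteq \lambda^{(1)} \subseteq \cdots \subseteq \lambda^{(n)} = \lambda$ obtained by reading the $\{0,1\}$-sequence on the horizontal edges between successive rows of the lattice. The heart of the argument is a column-by-column analysis of a single row. With the $W$-edge of the leftmost vertex fixed to $1$ and the $E$-edge of the rightmost vertex fixed to $0$, tracking the $W$-state as one moves left-to-right shows that the allowed local configurations force admissibility to be equivalent to each skew shape $\lambda^{(j)}/\lambda^{(j-1)}$ being a horizontal strip. Indeed, at each vertex the pair $(S,N)$ determines the transition rule for $W$: the values $(0,0)$ and $(1,1)$ preserve $W$; $(0,1)$ requires $W = 1$ and toggles it to $0$; $(1,0)$ requires $W = 0$ and toggles it to $1$. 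The absence of any vertical straight-through configuration is precisely what forbids two cells of $\lambda^{(j)}/\lambda^{(j-1)}$ from occupying the same column.

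Next, I identify the vertices of type $a_2$ (weight $1+\beta z$) with the entries of the GT pattern eligible to be marked: these occur exactly at the $(S,N) = (1,0)$ columns, which after translating from the boundary-path encoding to partition parts correspond bijectively to the sites satisfying $\lambda^{(j)}_{k+1} < \lambda^{(j-1)}_k$ in the definition of marked GT patterns. At each such vertex, I interpret the factor $1+\beta z$ as a sum over two choices: unmarked (weight $1$) or marked (weight $\beta z$). The $b_2$ vertices (weight $z$) correspond bijectively to the cells of the horizontal strip $\lambda^{(j)}/\lambda^{(j-1)}$, and the remaining admissible configurations $a_1$, $b_1$, $c_1$ contribute weight $1$. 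Multiplying all vertex contributions then recovers $\beta^{\excess(T)}\zz^{\wt(T)}$, where $T = \phi(\Lambda, M)$ is the set-valued tableau associated with the resulting marked GT pattern (noting that the symmetry of $\G_{\lambda}$ absorbs the reversal of row indices $z_i \leftrightarrow z_{n-i+1}$).

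The most delicate step will be matching the marking condition $\lambda^{(j)}_{k+1} < \lambda^{(j-1)}_k$ exactly to the location of $a_2$ vertices, which requires a careful dictionary between the up-step/right-step $\{0,1\}$-sequence encoding and the parts of the partitions, together with a few boundary edge cases. An alternative route, closer to the original proof of Motegi and Sakai, is via the Yang--Baxter equation: construct an auxiliary row $R$-matrix, deduce that $\prod_{i<j}(z_i - z_j) \cdot Z(\states_{\lambda};\zz;\beta)$ is antisymmetric in the $z_i$, and then identify $Z$ with the Ikeda--Naruse determinant~\eqref{eq:determinant_formula} by computing its value on one fully-determined boundary specialization or by matching leading coefficients.
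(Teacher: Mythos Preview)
Your combinatorial argument via marked GT patterns is correct and is essentially the route the paper itself sketches around the bijection $\fP$ and Equation~\eqref{eq:wt_by_GT_patterns}: identify the $\tt{a}_2$ vertices with the markable entries, the $\tt{b}_2$ vertices with the cells of each horizontal strip, and absorb the $w_0$-twist using the symmetry of $\G_{\lambda}$. The paper does not supply its own proof of this statement---it is cited from~\cite{MS13}, where the original argument proceeds instead through the wavefunction/Yang--Baxter machinery you mention as your alternative. So your primary approach matches the paper's implicit combinatorial reading, while the alternative you outline matches the cited source; both are valid.

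One small point to tighten in the write-up: your statement that $(S,N)=(1,1)$ ``preserves $W$'' is only accurate when $W=1$; the case $W=0$ is inadmissible (this is exactly the missing sixth vertex). You do say this in the next sentence (``the absence of any vertical straight-through configuration''), but make sure the transition-rule table reflects it explicitly, since that inadmissibility is what forces the horizontal-strip condition rather than merely a containment $\lambda^{(j-1)}\subseteq\lambda^{(j)}$. Also, when you carry out the ``delicate step'' of matching $\tt{a}_2$ locations to the condition $\lambda^{(j)}_{i+1}<\lambda^{(j-1)}_i$, remember the index constraint $1\le i<\ell(\lambda^{(j)})$ in the definition of markable entries; you will need to check that no $\tt{a}_2$ vertex occurs at the position corresponding to $i=\ell(\lambda^{(j)})$.
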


\begin{remark}
\label{remark:gauge}
While the result in~\cite{MS13} was given in terms of wavefunctions, the pictorial description makes it clear that this is the same as computing the partition function of $\states_{\lambda}$.
Indeed, an admissible configuration around a vertex can be considered as the $L$-matrix $L \in \End( W_a \otimes V_j )$, where $W_a = \CC^2$ is the $a$-th auxiliary space and $V_j$ is the $j$-th quantum space.
Furthermore, we have also taken a gauge transformation by $z_i = -\beta^{-1} - u_i^{-2}$.
\end{remark}

One important aspect of the model $\states_{\lambda}$ is every admissible state corresponds to a GT pattern $(\lambda^{(i)})_{i=0}^n$ by letting $\lambda^{(i)}$ be the $(n-i)$-th row of vertical edges (with the top (half) edges being the $0$-th row of the model) to be the $\{0,1\}$-sequence of a partition~\cite[Sec.~3]{MS14}.
We denote this bijection $\fP$ from $\states_{\lambda}$ to GT patterns with top row $\lambda$.
However, the bijection $\fP$ is not weight preserving, but instead we have to do also apply the map $z_i \mapsto z_{n+1-i}$.
Therefore, it is straightforward to see that for any $S \in \states_{\lambda}$, we have
\begin{equation}
\label{eq:wt_by_GT_patterns}
\wt(S) = \sum_{(\fP(S), M)} \beta^{\abs{M}} w_0 \wt(\fP(S), M),
\end{equation}
where we sum over all possible markings of $\fP(S)$.
Note that we have freedom for $\tt{a}_2$ vertices in a state to be or not be marked.
In particular, a state is unmarked if and only if it corresponds to a semistandard Young tableau.
Similarly for GT patterns.

\begin{example}
The following is an admissible state $S \in \states_{221}$ with $n = 3$ and $m = 5$:
\[
  \scalebox{.95}{\begin{tikzpicture}
    \draw[line width=0.5mm] (9,6) -- (9,5) -- (7,5) -- (7,3) -- (5,3) -- (5,1) -- (0,1);
    \draw[line width=0.5mm] (7,6) -- (7,5) -- (0,5);
    \draw[line width=0.5mm] (3,6) -- (3,3) -- (0,3);
    \draw (0,1)--(10,1);
    \draw (0,3)--(10,3);
    \draw (0,5)--(10,5);
    \foreach \x in {0,2,4,6,8,10}
    {
      \draw[fill=white] (\x,5) circle (.35);
      \draw[fill=white] (\x,3) circle (.35);
      \draw[fill=white] (\x,1) circle (.35);
    }
    \foreach \x in {1,3,5,7,9}
    {
      \draw (\x,0)--(\x,6);
      \draw[fill=white] (\x,0) circle (.35);
      \draw[fill=white] (\x,2) circle (.35);
      \draw[fill=white] (\x,4) circle (.35);
      \draw[fill=white] (\x,6) circle (.35);
      \path[fill=white] (\x,1) circle (.2);
      \node at (\x,1) {$z_3$};
      \path[fill=white] (\x,3) circle (.2);
      \node at (\x,3) {$z_2$};
      \path[fill=white] (\x,5) circle (.2);
      \node at (\x,5) {$z_1$};
    }

    \node at (1,6) {$0$};
    \node at (3,6) {$1$};
    \node at (5,6) {$0$};
    \node at (7,6) {$1$};
    \node at (9,6) {$1$};
    \node at (1,4) {$0$};
    \node at (3,4) {$1$};
    \node at (5,4) {$0$};
    \node at (7,4) {$1$};
    \node at (9,4) {$0$};
    \node at (1,2) {$0$};
    \node at (3,2) {$0$};
    \node at (5,2) {$1$};
    \node at (7,2) {$0$};
    \node at (9,2) {$0$};
    \node at (1,0) {$0$};
    \node at (3,0) {$0$};
    \node at (5,0) {$0$};
    \node at (7,0) {$0$};
    \node at (9,0) {$0$};
    \node at (0,5) {$1$};
    \node at (2,5) {$1$};
    \node at (4,5) {$1$};
    \node at (6,5) {$1$};
    \node at (8,5) {$1$};
    \node at (10,5) {$0$};
    \node at (0,3) {$1$};
    \node at (2,3) {$1$};
    \node at (4,3) {$0$};
    \node at (6,3) {$1$};
    \node at (8,3) {$0$};
    \node at (10,3) {$0$};
    \node at (0,1) {$1$};
    \node at (2,1) {$1$};
    \node at (4,1) {$1$};
    \node at (6,1) {$0$};
    \node at (8,1) {$0$};
    \node at (10,1) {$0$};
    \node at (1.00,6.8) {$ 1$};
    \node at (3.00,6.8) {$ 2$};
    \node at (5.00,6.8) {$ 3$};
    \node at (7.00,6.8) {$ 4$};
    \node at (9.00,6.8) {$ 5$};
    \node at (-.75,1) {$ 3$};
    \node at (-.75,3) {$ 2$};
    \node at (-.75,5) {$ 1$};
  \end{tikzpicture}}
\]
The Boltzmann weight of this state is
\[
\wt(S) = z_1^2 z_2 (1 + \beta z_2) z_3^2 = z_1^2 z_2 z_3^2 + \beta z_1^2 z_2^2 z_3^2,
\]
and the corresponding GT pattern is $\fP(S) = (\emptyset, 2, 21, 221)$.
There are two possible markings for $\fP(S)$, which correspond to the set-valued tableaux
\[
\ytableaushort{11,23,3}\,, \qquad\qquad \ytableaushort{1{1,\!2},23,3}\,.
\]
The left tableaux is also a semistandard Young tableau.
\end{example}

\begin{figure}
\[
\begin{array}{c@{\qquad}c@{\qquad}c@{\qquad}c@{\qquad}c}
\toprule
\begin{tikzpicture}[scale=0.7]
\draw (0,0) to [out = 0, in = 180] (2,2);
\draw (0,2) to [out = 0, in = 180] (2,0);
\draw[fill=white] (0,0) circle (.35);
\draw[fill=white] (0,2) circle (.35);
\draw[fill=white] (2,0) circle (.35);
\draw[fill=white] (2,2) circle (.35);
\node at (0,0) {$1$};
\node at (0,2) {$1$};
\node at (2,2) {$1$};
\node at (2,0) {$1$};
\node at (2,0) {$1$};
\path[fill=white] (1,1) circle (.3);
\node at (1,1) {$z_i,z_j$};
\end{tikzpicture}&
\begin{tikzpicture}[scale=0.7]
\draw (0,0) to [out = 0, in = 180] (2,2);
\draw (0,2) to [out = 0, in = 180] (2,0);
\draw[fill=white] (0,0) circle (.35);
\draw[fill=white] (0,2) circle (.35);
\draw[fill=white] (2,0) circle (.35);
\draw[fill=white] (2,2) circle (.35);
\node at (0,0) {$0$};
\node at (0,2) {$0$};
\node at (2,2) {$0$};
\node at (2,0) {$0$};
\path[fill=white] (1,1) circle (.3);
\node at (1,1) {$z_i,z_j$};
\end{tikzpicture}&
\begin{tikzpicture}[scale=0.7]
\draw (0,0) to [out = 0, in = 180] (2,2);
\draw (0,2) to [out = 0, in = 180] (2,0);
\draw[fill=white] (0,0) circle (.35);
\draw[fill=white] (0,2) circle (.35);
\draw[fill=white] (2,0) circle (.35);
\draw[fill=white] (2,2) circle (.35);
\node at (0,0) {$0$};
\node at (0,2) {$1$};
\node at (2,2) {$0$};
\node at (2,0) {$1$};
\path[fill=white] (1,1) circle (.3);
\node at (1,1) {$z_i,z_j$};
\end{tikzpicture}&
\begin{tikzpicture}[scale=0.7]
\draw (0,0) to [out = 0, in = 180] (2,2);
\draw (0,2) to [out = 0, in = 180] (2,0);
\draw[fill=white] (0,0) circle (.35);
\draw[fill=white] (0,2) circle (.35);
\draw[fill=white] (2,0) circle (.35);
\draw[fill=white] (2,2) circle (.35);
\node at (0,0) {$0$};
\node at (0,2) {$1$};
\node at (2,2) {$1$};
\node at (2,0) {$0$};
\path[fill=white] (1,1) circle (.3);
\node at (1,1) {$z_i,z_j$};
\end{tikzpicture}&
\begin{tikzpicture}[scale=0.7]
\draw (0,0) to [out = 0, in = 180] (2,2);
\draw (0,2) to [out = 0, in = 180] (2,0);
\draw[fill=white] (0,0) circle (.35);
\draw[fill=white] (0,2) circle (.35);
\draw[fill=white] (2,0) circle (.35);
\draw[fill=white] (2,2) circle (.35);
\node at (0,0) {$1$};
\node at (0,2) {$0$};
\node at (2,2) {$0$};
\node at (2,0) {$1$};
\path[fill=white] (1,1) circle (.3);
\node at (1,1) {$z_i,z_j$};
\end{tikzpicture}\\
\midrule
(1 + \beta z_i) z_j & (1 + \beta z_i) z_j & (z_j - z_i) z_j  & (1 + \beta z_i) z_j & (1 + \beta z_j) z_j \\
\bottomrule
\end{array}
\]
\caption{The $R$-matrix for the uncolored model.}
\label{fig:uncolored_R_matrix}
\end{figure}
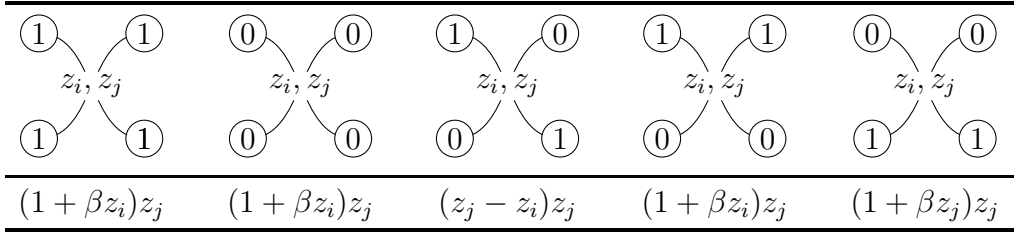

We note that this model is integrable, in the sense that the $R$-matrix given by Figure~\ref{fig:uncolored_R_matrix} satisfies the $RLL$-relation (a version of the Yang--Baxter equation).

\begin{proposition}[{\cite{MS13}}]
\label{prop:uncolored_YBE}
The partition function of the following two models are equal for any boundary conditions $a,b,c,d,e,f \in \{ 0, 1 \}$:
\begin{equation}
\label{eq:RLL_relation}
\begin{tikzpicture}[baseline=(current bounding box.center)]
  \draw (0,1) to [out = 0, in = 180] (2,3) to (4,3);
  \draw (0,3) to [out = 0, in = 180] (2,1) to (4,1);
  \draw (3,0) to (3,4);
  \draw[fill=white] (0,1) circle (.3);
  \draw[fill=white] (0,3) circle (.3);
  \draw[fill=white] (3,4) circle (.3);
  \draw[fill=white] (4,3) circle (.3);
  \draw[fill=white] (4,1) circle (.3);
  \draw[fill=white] (3,0) circle (.3);
  \node at (0,1) {$a$};
  \node at (0,3) {$b$};
  \node at (3,4) {$c$};
  \node at (4,3) {$d$};
  \node at (4,1) {$e$};
  \node at (3,0) {$f$};
\path[fill=white] (3,3) circle (.3);
\node at (3,3) {$z_i$};
\path[fill=white] (3,1) circle (.3);
\node at (3,1) {$z_j$};
\path[fill=white] (1,2) circle (.3);
\node at (1,2) {$z_i,z_j$};
\end{tikzpicture}\qquad\qquad
\begin{tikzpicture}[baseline=(current bounding box.center)]
  \draw (0,1) to (2,1) to [out = 0, in = 180] (4,3);
  \draw (0,3) to (2,3) to [out = 0, in = 180] (4,1);
  \draw (1,0) to (1,4);
  \draw[fill=white] (0,1) circle (.3);
  \draw[fill=white] (0,3) circle (.3);
  \draw[fill=white] (1,4) circle (.3);
  \draw[fill=white] (4,3) circle (.3);
  \draw[fill=white] (4,1) circle (.3);
  \draw[fill=white] (1,0) circle (.3);
  \node at (0,1) {$a$};
  \node at (0,3) {$b$};
  \node at (1,4) {$c$};
  \node at (4,3) {$d$};
  \node at (4,1) {$e$};
  \node at (1,0) {$f$};
\path[fill=white] (1,3) circle (.3);
\node at (1,3) {$z_j$};
\path[fill=white] (1,1) circle (.3);
\node at (1,1) {$z_i$};
\path[fill=white] (3,2) circle (.3);
\node at (3,2) {$z_i,z_j$};
\end{tikzpicture}
\end{equation}
\end{proposition}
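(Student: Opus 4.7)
The plan is a case-by-case verification over the $64$ possible boundary sextuples $(a,b,c,d,e,f)\in\{0,1\}^6$, reduced drastically by a conservation law. Each of the five vertex types in Figure~\ref{fig:uncolored_wts} and each of the five $R$-matrix configurations in Figure~\ref{fig:uncolored_R_matrix} has the property that the total number of $1$-labels on the left and bottom edges equals the total on the right and top edges. Summing this local rule over the three vertices in either side of \eqref{eq:RLL_relation} yields the single global selection rule $a+b+f=c+d+e$, so any sextuple violating this equation makes both partition functions vanish and the identity is automatic.

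For each remaining sextuple, I would enumerate the possible labelings of the three internal half-edges on each side, form the product of the three Boltzmann weights, and sum to obtain the partition function. The conservation law together with the list of admissible configurations typically pins down the internal edges uniquely, or up to a single binary choice, so each side reduces to a sum of at most two monomials in $z_i$, $z_j$, and $\beta$. I would cut the work roughly in half using the $\mathbb{Z}/2$ symmetry that reflects the picture top-to-bottom, simultaneously exchanging $z_i\leftrightarrow z_j$, $a\leftrightarrow b$, and $d\leftrightarrow e$, further reducing the analysis to a handful of essentially distinct cases.

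The main obstacle is a small number of ``branching'' sextuples in which an $\tt{a}_2$-vertex (contributing weight $1+\beta z$) appears on one side but not the other, so that both sides are genuine sums of two monomials. In those cases the identity reduces to a polynomial relation in $z_i$, $z_j$, $\beta$ that balances contributions from the factors $1+\beta z_i$, $1+\beta z_j$ against the crossing weight $z_j-z_i$; the key algebraic miracle is that the $\tt{c}_1$-weight is $1$, so no extra factor appears when a crossing is converted to a pair of horizontal/vertical segments. Since this is precisely the $RLL$-relation already established in~\cite{MS13}, the remaining work is largely bookkeeping: one must trace the gauge transformation of Remark~\ref{remark:gauge} through the weights in Figures~\ref{fig:uncolored_wts} and~\ref{fig:uncolored_R_matrix} to confirm that the overall gauge prefactors on both sides of \eqref{eq:RLL_relation} agree.
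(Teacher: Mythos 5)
Your overall strategy --- reduce to a finite case check via a line-conservation law and then verify each surviving boundary condition --- is exactly the paper's approach: the text notes that Proposition~\ref{prop:uncolored_YBE} is ``an identity of $2^3\times 2^3$ matrices'' and delegates the finite computation to the \textsc{SageMath} code in Appendix~\ref{sec:sage_code}, whereas you propose to organize the hand computation. Your selection rule $a+b+f=c+d+e$ is correct: every admissible configuration in Figure~\ref{fig:uncolored_wts} satisfies (number of $1$'s on left and bottom) $=$ (number of $1$'s on top and right), every admissible configuration in Figure~\ref{fig:uncolored_R_matrix} has equally many $1$'s on its two left edges as on its two right edges, and summing these over the three vertices of either picture gives the stated global constraint. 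Your identification of the gauge issue also matches Remark~\ref{remark:gauge}.

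However, the $\ZZ/2$ reflection symmetry you invoke to halve the case analysis is not available. The set of admissible five-vertex configurations is not closed under swapping the top and bottom edges: the $\mathtt{a}_2$ configuration (labels $0,0,1,1$ on left, top, right, bottom) reflects to $(0,1,1,0)$, which is inadmissible, and likewise the $R$-configuration with weight $(z_j-z_i)z_j$ reflects to an inadmissible one; a top-to-bottom reflection would also have to exchange $c\leftrightarrow f$, which you omit. Consequently an admissible state can reflect to a state of weight $0$, and the partition functions for $(a,b,c,d,e,f)$ and for the reflected boundary data with $z_i\leftrightarrow z_j$ are not related by this operation, so every case you dispatch ``by symmetry'' is in fact unverified. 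This is easily repaired --- check all sextuples satisfying the selection rule directly, or verify the $8\times 8$ matrix identity $RL_1L_2=L_2L_1R$ by computer as in the appendix --- but as written roughly half the nontrivial cases, including some of the genuine ``branching'' ones you single out, are left without justification. Relatedly, the proposal describes how the verification would go rather than carrying it out; for a statement whose entire content is this computation, the cases where both sides are sums of two monomials must be exhibited explicitly (or delegated to a computer) before the proof is complete.
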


We note that Proposition~\ref{prop:uncolored_YBE} is an identity of $2^3 \times 2^3$ matrices, and so it is a finite computation to verify this still holds under the gauge transformation we have taken (see Remark~\ref{remark:gauge}).
Furthermore, we can see that the $R$-matrix corresponds to the vertices of the $L$-matrix rotated by $45^{\circ}$ clockwise and the weights of the $L$-matrix take $z = \frac{z_j - z_i}{1 + \beta z_i}$ and are scaled by $(1 + \beta z_i) z_j$.

\section{Colored lattice models and Lascoux atoms}\label{sec:coloredmodels}

We will build colored models that represent Lascoux atoms, generalizing the work in~\cite{BBBG19} where models were constructed for Demazure atoms.
The model we consider is a colored version of the lattice model of Motegi and Sakai~\cite{MS13} that represents a Grothendieck polynomial described in Section~\ref{sec:uncolored_model}.

\begin{figure}
\[
\begin{array}{c@{\;\;}c@{\;\;}c@{\;\;}c@{\;\;}c@{\;\;}c@{\;\;}c}
\toprule
  \tt{a}_1&\tt{a}_2&\tt{b}_1&\tt{b}^{\dagger}_1&\tt{b}^{\circ}_1&\tt{b}_2&\tt{c}_1\\
\midrule
\begin{tikzpicture}
\coordinate (a) at (-.75, 0);
\coordinate (b) at (0, .75);
\coordinate (c) at (.75, 0);
\coordinate (d) at (0, -.75);
\coordinate (aa) at (-.75,.5);
\coordinate (cc) at (.75,.5);
\draw (a)--(0,0);
\draw (b)--(0,0);
\draw (c)--(0,0);
\draw (d)--(0,0);
\draw[fill=white] (a) circle (.25);
\draw[fill=white] (b) circle (.25);
\draw[fill=white] (c) circle (.25);
\draw[fill=white] (d) circle (.25);
\node at (0,1) { };
\node at (a) {$0$};
\node at (b) {$0$};
\node at (c) {$0$};
\node at (d) {$0$};
\path[fill=white] (0,0) circle (.2);
\node at (0,0) {$z$};
\end{tikzpicture}
& \begin{tikzpicture}
\coordinate (a) at (-.75, 0);
\coordinate (b) at (0, .75);
\coordinate (c) at (.75, 0);
\coordinate (d) at (0, -.75);
\coordinate (aa) at (-.75,.5);
\coordinate (cc) at (.75,.5);
\draw (a)--(0,0);
\draw (b)--(0,0);
\draw[line width=0.5mm, brown] (c)--(0,0);
\draw[line width=0.5mm, brown] (d)--(0,0);
\draw[fill=white] (a) circle (.25);
\draw[fill=white] (b) circle (.25);
\draw[line width=0.5mm,brown, fill=white] (c) circle (.25);
\draw[line width=0.5mm,brown, fill=white] (d) circle (.25);
\node at (0,1) { };
\node at (a) {$0$};
\node at (b) {$0$};
\node at (c) {$d$};
\node at (d) {$d$};
\path[fill=white] (0,0) circle (.2);
\node at (0,0) {$z$};
\end{tikzpicture}
& \begin{tikzpicture}
\coordinate (a) at (-.75, 0);
\coordinate (b) at (0, .75);
\coordinate (c) at (.75, 0);
\coordinate (d) at (0, -.75);
\coordinate (aa) at (-.75,.5);
\coordinate (cc) at (.75,.5);
\draw[line width=0.5mm, blue] (a)--(0,0);
\draw[line width=0.5mm, blue] (b)--(0,0);
\draw[line width=0.5mm, red] (c)--(0,0);
\draw[line width=0.5mm, red] (d)--(0,0);
\draw[line width=0.5mm, blue,fill=white] (a) circle (.25);
\draw[line width=0.5mm, blue,fill=white] (b) circle (.25);
\draw[line width=0.5mm, red, fill=white] (c) circle (.25);
\draw[line width=0.5mm, red, fill=white] (d) circle (.25);
\node at (0,1) { };
\node at (a) {$c_j$};
\node at (b) {$c_j$};
\node at (c) {$c_i$};
\node at (d) {$c_i$};
\path[fill=white] (0,0) circle (.2);
\node at (0,0) {$z$};
\end{tikzpicture}
& \begin{tikzpicture}
\coordinate (a) at (-.75, 0);
\coordinate (b) at (0, .75);
\coordinate (c) at (.75, 0);
\coordinate (d) at (0, -.75);
\coordinate (aa) at (-.75,.5);
\coordinate (cc) at (.75,.5);
\draw[line width=0.5mm, blue] (a)--(0,0);
\draw[line width=0.6mm, red] (b)--(0,0);
\draw[line width=0.5mm, blue] (c)--(0,0);
\draw[line width=0.6mm, red] (d)--(0,0);
\draw[line width=0.5mm, blue,fill=white] (a) circle (.25);
\draw[line width=0.5mm, red,fill=white] (b) circle (.25);
\draw[line width=0.5mm, blue, fill=white] (c) circle (.25);
\draw[line width=0.5mm, red, fill=white] (d) circle (.25);
\node at (0,1) { };
\node at (a) {$c_j$};
\node at (b) {$c_i$};
\node at (c) {$c_j$};
\node at (d) {$c_i$};
\path[fill=white] (0,0) circle (.2);
\node at (0,0) {$z$};
\end{tikzpicture}
& \begin{tikzpicture}
\coordinate (a) at (-.75, 0);
\coordinate (b) at (0, .75);
\coordinate (c) at (.75, 0);
\coordinate (d) at (0, -.75);
\coordinate (aa) at (-.75,.5);
\coordinate (cc) at (.75,.5);
\draw[line width=0.5mm, brown] (a)--(0,0);
\draw[line width=0.6mm, brown] (b)--(0,0);
\draw[line width=0.5mm, brown] (c)--(0,0);
\draw[line width=0.6mm, brown] (d)--(0,0);
\draw[line width=0.5mm, brown,fill=white] (a) circle (.25);
\draw[line width=0.5mm, brown,fill=white] (b) circle (.25);
\draw[line width=0.5mm, brown, fill=white] (c) circle (.25);
\draw[line width=0.5mm, brown, fill=white] (d) circle (.25);
\node at (0,1) { };
\node at (a) {$d$};
\node at (b) {$d$};
\node at (c) {$d$};
\node at (d) {$d$};
\path[fill=white] (0,0) circle (.2);
\node at (0,0) {$z$};
\end{tikzpicture}
& \begin{tikzpicture}
\coordinate (a) at (-.75, 0);
\coordinate (b) at (0, .75);
\coordinate (c) at (.75, 0);
\coordinate (d) at (0, -.75);
\coordinate (aa) at (-.75,.5);
\coordinate (cc) at (.75,.5);
\draw[line width=0.5mm, brown] (a)--(0,0);
\draw(b)--(0,0);
\draw[line width=0.5mm, brown] (c)--(0,0);
\draw (d)--(0,0);
\draw[line width=0.5mm,brown,fill=white] (a) circle (.25);
\draw[fill=white] (b) circle (.25);
\draw[line width=0.5mm,brown,fill=white] (c) circle (.25);
\draw[fill=white] (d) circle (.25);
\node at (0,1) { };
\node at (a) {$d$};
\node at (b) {$0$};
\node at (c) {$d$};
\node at (d) {$0$};
\path[fill=white] (0,0) circle (.2);
\node at (0,0) {$z$};
\end{tikzpicture}
& \begin{tikzpicture}
\coordinate (a) at (-.75, 0);
\coordinate (b) at (0, .75);
\coordinate (c) at (.75, 0);
\coordinate (d) at (0, -.75);
\coordinate (aa) at (-.75,.5);
\coordinate (cc) at (.75,.5);
\draw[line width=0.5mm, brown] (a)--(0,0);
\draw[line width=0.6mm, brown] (b)--(0,0);
\draw (c)--(0,0);
\draw (d)--(0,0);
\draw[line width=0.5mm,brown,fill=white] (a) circle (.25);
\draw[line width=0.5mm,brown,fill=white] (b) circle (.25);
\draw[fill=white] (c) circle (.25);
\draw[fill=white] (d) circle (.25);
\node at (0,1) { };
\node at (a) {$d$};
\node at (b) {$d$};
\node at (c) {$0$};
\node at (d) {$0$};
\path[fill=white] (0,0) circle (.2);
\node at (0,0) {$z$};
\end{tikzpicture}
\\ 
\midrule
1 & 1+\beta z & 1 &1 & 1 & z & 1\\
\bottomrule
\end{array}
\]
\caption{The colored Boltzmann weights with ${\color{red} c_i} > {\color{blue} c_j}$ and ${\color{brown} d}$ being any color.}
\label{fig:colored_weights}
\end{figure}

Consider a rectangular grid of $n$ horizontal lines and $m$ vertical lines.
We also fix an $n$-tuple of colors $\cc = (c_1 > c_2 > \cdots > c_n > 0)$.
Let $w \in \sym{n}$, and let $w \cc = (c_{w(1)}, c_{w(2)}, \dotsc, c_{w(n)})$ be the colors permuted by $w$.
We label the edges by $\{0\} \sqcup \cc$ with the bottom and right (half) edges are labeled by $0$, the left (half) edges are labeled by $w w_0 \cc$ from top to bottom, and the top edges given by $\lambda$ with the $i$-th $1$ in the $\{0,1\}$-sequence of $\lambda$, counted from the left, having color $c_i$.
We give Boltzmann weights to the vertices according to Figure~\ref{fig:colored_weights}.
Let $\overline{\states}_{\lambda,w}$ denote the set of all possible admissible states for this model.

With the addition of colors and based on the admissible configurations, we can think of a state in $\states_{\lambda,w}$ as corresponding to a wiring diagram of $w$, where the different strands are represented by different colors.
Indeed, we can think of $\tt{a}_2$, $\tt{b}_2$, and $\tt{c}_1$ as a single strand passing through the vertex (possibly turning), $\tt{b}_1^{\dagger}$ as two strands crossing at the vertex (thus corresponding to a simple transposition), and $\tt{b}_1$ as two strands both passing near the vertex but not crossing.

\begin{remark}
In contrast to~\cite{BBBG19} we have colored the left side instead of the right side and we are working with $w w_0 \cc$ (instead of $w \cc$) due to our indexing convention.
Furthermore, we include this $w_0$ in order to emphasize that taking $\overline{\states}_{\lambda,1}$ corresponds to the wiring diagram of $w_0$ (with every pair of strands crossing exactly once).
\end{remark}

\begin{figure}
\[
\begin{array}{c@{\hspace{30pt}}c@{\hspace{30pt}}c@{\hspace{30pt}}c}
\toprule
\begin{tikzpicture}[scale=0.7]
\draw (0,0) to [out = 0, in = 180] (2,2);
\draw (0,2) to [out = 0, in = 180] (2,0);
\draw[fill=white] (0,0) circle (.35);
\draw[fill=white] (0,2) circle (.35);
\draw[fill=white] (2,0) circle (.35);
\draw[fill=white] (2,2) circle (.35);
\node at (0,0) {$0$};
\node at (0,2) {$0$};
\node at (2,2) {$0$};
\node at (2,0) {$0$};
\path[fill=white] (1,1) circle (.3);
\node at (1,1) {$z_i,z_j$};
\end{tikzpicture}&
\begin{tikzpicture}[scale=0.7]
\draw (0,0) to [out = 0, in = 180] (2,2);
\draw (0,2) to [out = 0, in = 180] (2,0);
\draw[fill=white] (0,0) circle (.35);
\draw[line width=0.5mm, brown, fill=white] (0,2) circle (.35);
\draw[line width=0.5mm, brown, fill=white] (2,2) circle (.35);
\draw[fill=white] (2,0) circle (.35);
\node at (0,0) {$0$};
\node at (0,2) {$d$};
\node at (2,2) {$d$};
\node at (2,0) {$0$};
\path[fill=white] (1,1) circle (.3);
\node at (1,1) {$z_i,z_j$};
\end{tikzpicture}&
\begin{tikzpicture}[scale=0.7]
\draw (0,0) to [out = 0, in = 180] (2,2);
\draw (0,2) to [out = 0, in = 180] (2,0);
\draw[fill=white] (0,0) circle (.35);
\draw[line width=0.5mm, brown, fill=white] (0,2) circle (.35);
\draw[fill=white] (2,2) circle (.35);
\draw[line width=0.5mm, brown, fill=white] (2,0) circle (.35);
\node at (0,0) {$0$};
\node at (0,2) {$d$};
\node at (2,2) {$0$};
\node at (2,0) {$d$};
\path[fill=white] (1,1) circle (.3);
\node at (1,1) {$z_i,z_j$};
\end{tikzpicture}&
\begin{tikzpicture}[scale=0.7]
\draw (0,0) to [out = 0, in = 180] (2,2);
\draw (0,2) to [out = 0, in = 180] (2,0);
\draw[line width=0.5mm, brown, fill=white] (0,0) circle (.35);
\draw[fill=white] (0,2) circle (.35);
\draw[fill=white] (2,2) circle (.35);
\draw[line width=0.5mm, brown, fill=white] (2,0) circle (.35);
\node at (0,0) {$d$};
\node at (0,2) {$0$};
\node at (2,2) {$0$};
\node at (2,0) {$d$};
\path[fill=white] (1,1) circle (.3);
\node at (1,1) {$z_i,z_j$};
\end{tikzpicture}\\
   \midrule
 (1 + \beta z_i) z_j & (1+ \beta z_i) z_j & (z_j - z_i) z_j & (1 + \beta z_j) z_j \\
   \midrule
\begin{tikzpicture}[scale=0.7]
\draw (0,0) to [out = 0, in = 180] (2,2);
\draw (0,2) to [out = 0, in = 180] (2,0);
\draw[line width=0.5mm, blue, fill=white] (0,0) circle (.35);
\draw[line width=0.5mm, red, fill=white] (0,2) circle (.35);
\draw[line width=0.5mm, red, fill=white] (2,2) circle (.35);
\draw[line width=0.5mm, blue, fill=white] (2,0) circle (.35);
\node at (0,0) {$c_j$};
\node at (0,2) {$c_i$};
\node at (2,2) {$c_i$};
\node at (2,0) {$c_j$};
\path[fill=white] (1,1) circle (.3);
\node at (1,1) {$z_i,z_j$};
\end{tikzpicture}&
\begin{tikzpicture}[scale=0.7]
\draw (0,0) to [out = 0, in = 180] (2,2);
\draw (0,2) to [out = 0, in = 180] (2,0);
\draw[line width=0.5mm, red, fill=white] (0,0) circle (.35);
\draw[line width=0.5mm, blue, fill=white] (0,2) circle (.35);
\draw[line width=0.5mm, blue, fill=white] (2,2) circle (.35);
\draw[line width=0.5mm, red, fill=white] (2,0) circle (.35);
\node at (0,0) {$c_i$};
\node at (0,2) {$c_j$};
\node at (2,2) {$c_j$};
\node at (2,0) {$c_i$};
\path[fill=white] (1,1) circle (.3);
\node at (1,1) {$z_i,z_j$};
\end{tikzpicture}&
\begin{tikzpicture}[scale=0.7]
\draw (0,0) to [out = 0, in = 180] (2,2);
\draw (0,2) to [out = 0, in = 180] (2,0);
\draw[line width=0.5mm, red, fill=white] (0,0) circle (.35);
\draw[line width=0.5mm, blue, fill=white] (0,2) circle (.35);
\draw[line width=0.5mm, red, fill=white] (2,2) circle (.35);
\draw[line width=0.5mm, blue, fill=white] (2,0) circle (.35);
\node at (0,0) {$c_i$};
\node at (0,2) {$c_j$};
\node at (2,2) {$c_i$};
\node at (2,0) {$c_j$};
\path[fill=white] (1,1) circle (.3);
\node at (1,1) {$z_i,z_j$};
\end{tikzpicture}&
\begin{tikzpicture}[scale=0.7]
\draw (0,0) to [out = 0, in = 180] (2,2);
\draw (0,2) to [out = 0, in = 180] (2,0);
\draw[line width=0.5mm, brown, fill=white] (0,0) circle (.35);
\draw[line width=0.5mm, brown, fill=white] (0,2) circle (.35);
\draw[line width=0.5mm, brown, fill=white] (2,2) circle (.35);
\draw[line width=0.5mm, brown, fill=white] (2,0) circle (.35);
\node at (0,0) {$d$};
\node at (0,2) {$d$};
\node at (2,2) {$d$};
\node at (2,0) {$d$};
\path[fill=white] (1,1) circle (.3);
\node at (1,1) {$z_i,z_j$};
\end{tikzpicture}\\
   \midrule
(1 + \beta z_j) z_ i & (1 + \beta z_i) z_j & z_j - z_i & (1 + \beta z_i) z_j \\
   \bottomrule
\end{array}
\]
\caption{The colored $R$-matrix with ${\color{red} c_i} > {\color{blue} c_j}$ and ${\color{brown} d}$ being any color. Note that the weights are not symmetric with respect to color.}
\label{fig:colored_R_matrix}
\end{figure}
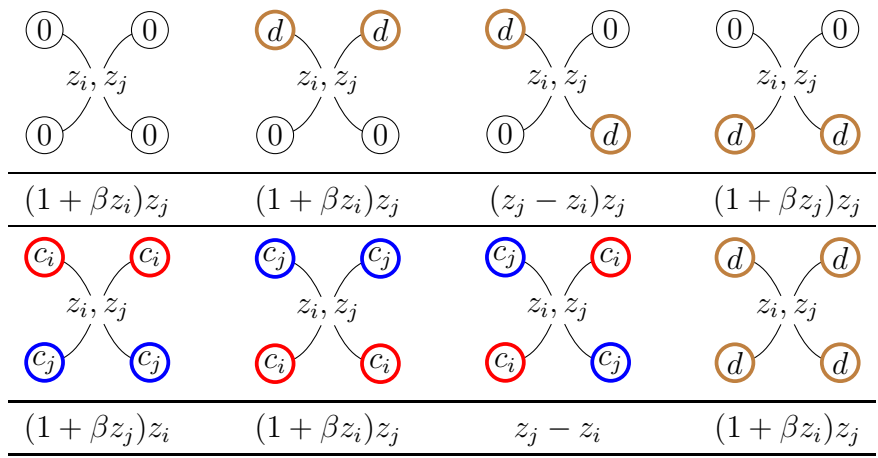

Our model is amenable to study via the Yang--Baxter equation.
We introduce the $R$-matrix for this model, which we call the \defn{colored $R$-matrix}, and the admissible configurations with their Boltzmann weights are given in Figure~\ref{fig:colored_R_matrix}.
To distinguish them from the usual vertices given by the $L$ matrix, we draw them tilted on their side.
Together with the previously introduced vertices, they satisfy the Yang--Baxter equation:

\begin{proposition}
\label{prop:YBE}
Consider the $L$-matrix given in Figure~\ref{fig:colored_weights} and $R$-matrix given in Figure~\ref{fig:colored_R_matrix}.
The partition function of the two models given by~\eqref{eq:RLL_relation} are equal for any boundary conditions $a,b,c,d,e,f \in \{ 0, c_1, \dotsc, c_n \}$. 
\end{proposition}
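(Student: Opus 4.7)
The approach is a finite case-by-case verification, reduced in size by color conservation and the uncolored Yang--Baxter equation of Proposition~\ref{prop:uncolored_YBE}. The identity \eqref{eq:RLL_relation} is nontrivial only when the multiset of nonzero labels on $\{a,b,c\}$ equals that on $\{d,e,f\}$, since colors are conserved through every admissible configuration in Figures~\ref{fig:colored_weights} and~\ref{fig:colored_R_matrix}. In particular at most three distinct colors can appear among the boundary data, and since the weights depend on the colors only through equality and the order relation, I may assume the colors that actually appear are $c_i, c_j$ (the pair attached to the $R$-matrix) together with at most one additional color $c_k$ carried through the vertical $L$-line.

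First I would handle the trivial case in which only one color $d$ appears among the nonzero boundary labels. No $\tt{b}_1^{\dagger}$ crossing can occur there, and all remaining admissible weights coincide with the uncolored weights of Figures~\ref{fig:uncolored_wts} and~\ref{fig:uncolored_R_matrix} under the substitution $d \leftrightarrow 1$, so \eqref{eq:RLL_relation} reduces directly to Proposition~\ref{prop:uncolored_YBE}.

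The rest of the argument enumerates the cases with two or three distinct colors. For each admissible boundary condition I would list the admissible states on both sides of~\eqref{eq:RLL_relation} strand by strand using color conservation, multiply the colored Boltzmann weights, and verify the resulting polynomial identity in $z_i, z_j, \beta$. Up to color-relabeling symmetry only finitely many boundary conditions arise, so the full verification is a finite computation that can be carried out by hand or by computer algebra.

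The main obstacle is the three-color case, in which a strand of color $c_k$ passes through the vertical $L$-line while $c_i, c_j$ cross through the $R$-matrix. Both sides of the YBE then admit several admissible states involving the crossing vertex $\tt{b}_1^{\dagger}$, and the identity hinges on the asymmetric $R$-matrix weights $(1+\beta z_j)z_i$ versus $(1+\beta z_i)z_j$ attached to the two color-preserving colored configurations in Figure~\ref{fig:colored_R_matrix}: these are calibrated precisely so that rerouting a strand between the $z_i$-row and the $z_j$-row across the YBE contributes the compensating factor needed to match the corresponding partition function on the opposite side.
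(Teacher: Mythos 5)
Your proposal is correct and takes essentially the same route as the paper: the authors also reduce to at most three colors by color conservation and then verify the identity as a finite computation (which they delegate to \textsc{SageMath} code in the appendix rather than organizing the cases by hand). Your additional observations---that the single-color case collapses to Proposition~\ref{prop:uncolored_YBE} and that the asymmetric weights $(1+\beta z_j)z_i$ versus $(1+\beta z_i)z_j$ are what make the three-color case balance---are consistent with, and a reasonable hand-organized version of, the same verification.
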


\begin{proof}
This is a computation that requires at most $3$ colors, so $R$ and $L$ are $4^3 \times 4^3$ matrices (note that the colors are conversed when applying the $R$-matrix and $L$-matrix).
Thus, it is a finite computation to check this that can easily be done by computer (for example, as given in Appendix~\ref{sec:sage_code}).
\end{proof}
 
By using the Yang--Baxter equation and the well known train argument, we can derive the following equation for the partition functions of our lattice model. 

\begin{lemma}
\label{lemma:recurrence_eq}
Let $w \in \sym{n}$, and consider $s_i$ be such that $s_i w > w$.
Then we have
\[
Z(\overline{\states}_{\lambda, s_i w}; \zz; \beta) = \frac{(1  + \beta z_i ) z_{i+1} \bigl( Z(\overline{\states}_{\lambda, w}; \zz; \beta) -  Z(\overline{\states}_{\lambda, w}; s_i \zz; \beta) \bigr)}{z_i - z_{i+1}}.
\]
\end{lemma}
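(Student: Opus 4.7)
The plan is to apply the standard Yang--Baxter ``train'' argument. I would attach an additional $R$-vertex from Figure~\ref{fig:colored_R_matrix} to the left of the lattice representing $\overline{\states}_{\lambda, w}$, across rows $i$ and $i+1$, and fix the two new far-left half-edges to carry the same labels as the left boundary of $\overline{\states}_{\lambda, w}$ at rows $i$ and $i+1$. Denote this augmented model by $\model_{aug}$ and its partition function by $Z_{aug}$.

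First, I would compute $Z_{aug}$ by repeatedly applying the RLL relation of Proposition~\ref{prop:YBE}, one application per column, to push the attached $R$-vertex through to the right. After the push, the $L$-vertices in rows $i, i+1$ have their spectral parameters swapped, so the bulk lattice becomes $\overline{\states}_{\lambda, w}$ evaluated at $s_i \zz$, and the $R$-vertex emerges on the right with its two left half-edges attached to the all-$0$ right boundary of the lattice. Inspecting Figure~\ref{fig:colored_R_matrix}, the only admissible $R$-configuration with both left half-edges equal to $0$ is the trivial one, with weight $(1+\beta z_i)z_{i+1}$ and right half-edges also $0$ (matching the external right boundary). Hence $Z_{aug} = (1+\beta z_i)z_{i+1} \cdot Z(\overline{\states}_{\lambda, w}; s_i \zz; \beta)$.

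Next, I would compute the same $Z_{aug}$ by enumerating admissible configurations of the $R$-vertex in its original position on the left, given the prescribed far-left labels. The hypothesis $s_i w > w$ guarantees that the two left-boundary colors of $\overline{\states}_{\lambda, w}$ at rows $i$ and $i+1$ appear in the order (smaller color above larger color) for which exactly two $R$-configurations from Figure~\ref{fig:colored_R_matrix} are admissible: a ``pass-through'' of weight $(1+\beta z_i)z_{i+1}$ whose intermediate edges reproduce the original left boundary, and a ``crossing'' of weight $z_{i+1} - z_i$ whose intermediate edges carry the two labels swapped. By the convention relating the left boundaries of $\overline{\states}_{\lambda, w}$ and $\overline{\states}_{\lambda, s_i w}$ (which differ exactly by transposing rows $i$ and $i+1$), the crossing configuration produces precisely the lattice $\overline{\states}_{\lambda, s_i w}$. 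Summing gives
\[
Z_{aug} = (1+\beta z_i)z_{i+1} \cdot Z(\overline{\states}_{\lambda, w}; \zz; \beta) + (z_{i+1}-z_i) \cdot Z(\overline{\states}_{\lambda, s_i w}; \zz; \beta),
\]
and equating the two expressions for $Z_{aug}$ and solving for $Z(\overline{\states}_{\lambda, s_i w}; \zz; \beta)$ yields the claimed recurrence.

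The main obstacle will be the case-by-case verification in the second step: enumerating the admissible $R$-configurations from Figure~\ref{fig:colored_R_matrix} compatible with the specific colors at rows $i, i+1$, and confirming that the ``crossing'' configuration produces the left boundary of $\overline{\states}_{\lambda, s_i w}$ under the paper's conventions. Once these identifications are made, the calculation is the train argument used in~\cite{BBBG19} to derive Demazure-atom recurrences.
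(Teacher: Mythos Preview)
Your proposal is correct and follows essentially the same train argument as the paper. The paper phrases it as attaching the $R$-vertex on the right (of the bulk lattice with spectral parameters already swapped in rows $i,i+1$) and then pushing it to the left via Proposition~\ref{prop:YBE}, while you attach on the left and push to the right; these are the same identity read in opposite directions, and both produce the intermediate equation
\[
(1+\beta z_i)z_{i+1}\, Z(\overline{\states}_{\lambda,w}; s_i\zz;\beta)
= (1+\beta z_i)z_{i+1}\, Z(\overline{\states}_{\lambda,w}; \zz;\beta) + (z_{i+1}-z_i)\, Z(\overline{\states}_{\lambda,s_iw}; \zz;\beta),
\]
from which the lemma follows.

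One small slip to fix when you carry out the case check you flagged: writing $\mathbf{d}=ww_0\cc$, the hypothesis $s_iw>w$ gives $d_i>d_{i+1}$, so in $\overline{\states}_{\lambda,w}$ itself the \emph{larger} color sits in row $i$ (above) and the smaller in row $i+1$, contrary to your parenthetical. The two admissible $R$-configurations you need arise because the $R$-vertex is drawn as a crossing, so the far-left external half-edges of the augmented model carry $(d_{i+1},d_i)$ from top to bottom (cf.\ Figure~\ref{fig:train_argument}); with that orientation the enumeration from Figure~\ref{fig:colored_R_matrix} gives exactly the pass-through weight $(1+\beta z_i)z_{i+1}$ and the crossing weight $z_{i+1}-z_i$ attached to the correct boundary labelings, just as you state.
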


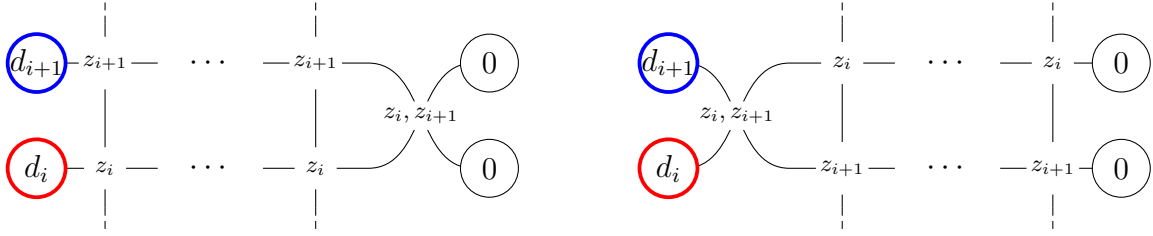
\begin{figure}
\begin{tikzpicture}[scale=0.7]
\begin{scope}[shift={(-6,0)}]
  \draw (4,1) to (6,1) to [out = 0, in = 180] (8,3);
  \draw (4,3) to (6,3) to [out = 0, in = 180] (8,1);
  \draw[fill=white] (8.3,3) circle (.55);
  \draw[fill=white] (8.3,1) circle (.55);
  \draw (0,1) to (2,1);
  \draw (0,3) to (2,3);
  \draw (5,0.25) to (5,3.75);
  \draw (1,0.25) to (1,3.75);
  \draw[line width=0.5mm,red,fill=white] (-0.3,1) circle (.55);
  \draw[line width=0.5mm,blue,fill=white] (-0.3,3) circle (.55);

  \node at (3,1) {$\cdots$};
  \node at (3,3) {$\cdots$};

  \draw[densely dashed] (1,3.75) to (1,4.25);
  \draw[densely dashed] (1,0.25) to (1,-0.25);
  \draw[densely dashed] (5,3.75) to (5,4.25);
  \draw[densely dashed] (5,0.25) to (5,-0.25);
  \path[fill=white] (1,3) circle (.5);
  \node at (1,3) {\scriptsize$z_{i+1}$};
  \path[fill=white] (1,1) circle (.4);
  \node at (1,1) {\scriptsize$z_i$};

  \path[fill=white] (5,3) circle (.5);
  \node (a) at (5,3) {\scriptsize$z_{i+1}$};
  \path[fill=white] (5,1) circle (.4);
  \node at (5,1) {\scriptsize$z_i$};

  \path[fill=white] (7,2) circle (.3);
  \node at (7,2) {\scriptsize$z_i,z_{i+1}$};
  \node at (8.3,1) {$0$};
  \node at (8.3,3) {$0$};
  \node at (-0.3,1) {$d_i$};
  \node at (-0.3,3) {$d_{i+1}$};
\end{scope}
\begin{scope}[shift={(6,0)}]
  \draw (0,1) to [out = 0, in = 180] (2,3) to (4,3);
  \draw (0,3) to [out = 0, in = 180] (2,1) to (4,1);
  \draw (3,0.25) to (3,3.75);
  \draw (7,0.25) to (7,3.75);
  \draw (6,1) to (8,1);
  \draw (6,3) to (8,3);
  \draw[line width=0.5mm,red,fill=white] (-0.3,1) circle (.55);
  \draw[line width=0.5mm,blue,fill=white] (-0.3,3) circle (.55);
  \draw[fill=white] (8.3,3) circle (.55);
  \draw[fill=white] (8.3,1) circle (.55);
  \node at (-0.3,1) {$d_i$};
  \node at (-0.3,3) {$d_{i+1}$};
  \node at (5,3) {$\cdots$};
  \node at (5,1) {$\cdots$};
  \draw[densely dashed] (3,3.75) to (3,4.25);
  \draw[densely dashed] (3,0.25) to (3,-0.25);
  \draw[densely dashed] (7,3.75) to (7,4.25);
  \draw[densely dashed] (7,0.25) to (7,-0.25);
  \node at (8.3,1) {$0$};
  \node at (8.3,3) {$0$};
\path[fill=white] (3,3) circle (.4);
\node at (3,3) {\scriptsize$z_i$};
\path[fill=white] (3,1) circle (.5);
\node at (3,1) {\scriptsize$z_{i+1}$};
\path[fill=white] (7,3) circle (.4);
\node at (7,3) {\scriptsize$z_i$};
\path[fill=white] (7,1) circle (.5);
\node at (7,1) {\scriptsize$z_{i+1}$};
\path[fill=white] (1,2) circle (.3);
\node at (1,2) {\scriptsize$z_i,z_{i+1}$};
\end{scope}
\end{tikzpicture}
\caption{Left: The model $\overline{\states}_{\lambda,w}$ with an $R$-matrix attached on the right. Right: The model after using the Yang--Baxter equation in the same model.}
\label{fig:train_argument}
\end{figure}

\begin{proof}
Consider the model $\overline{\states}_{\lambda,w}$, and let $\mathbf{d} = w w_0 \cc$.
Since $s_i w > w$, we note that $d_{i+1} < d_i$.
We construct a new model $\model$ by adding an $R$-matrix $R(z_i,z_{i+1})$ to the right with rightmost boundary entries being $0$.
Note that there is a bijection between states of $\model$ and $\overline{\states}_{\lambda,w}$ as there is precisely one admissible configuration at the $R$-matrix.
However, there is an additional factor due to the Boltzmann weight of the $R$-matrix configuration, and hence, we have $Z(\model; \zz; \beta) = (1 + \beta z_i) z_{i+1} Z(\overline{\states}_{\lambda,w}; s_i \zz; \beta)$.
Next, by repeatedly using the Yang--Baxter equation (Proposition~\ref{prop:YBE}), we obtain an equivalent model $\model'$ with an $R$-matrix on the left with colors $d_i$ and $d_{i+1}$.
For a pictorial description, see Figure~\ref{fig:train_argument}.
In this case, we have two possible admissible configurations for the $R$-matrix, one of which corresponds to $\overline{\states}_{\lambda,w}$ and the other to $\overline{\states}_{\lambda,s_i w}$.
Therefore, we obtain
\begin{align*}
(1 + \beta z_i) z_{i+1} Z(\overline{\states}_{\lambda,w}; s_i \zz; \beta) & = Z(\model; \zz; \beta) = Z(\model'; \zz; \beta)
\\ & = (1 + \beta z_i) z_{i+1} Z(\overline{\states}_{\lambda,w}; \zz; \beta) +  (z_{i+1} - z_i) Z(\overline{\states}_{\lambda,s_i w}; \zz; \beta).
\end{align*}
Solving this for $Z(\states_{\lambda,s_i w}; \zz; \beta)$, we obtain our desired formula.
\end{proof}

\begin{theorem}
\label{thm:5vertex_atoms}
We have
\[
\overline{L}_{w\lambda}(\zz; \beta) = Z(\overline{\states}_{\lambda,w}; \zz; \beta).
\]
\end{theorem}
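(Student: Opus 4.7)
The plan is to proceed by induction on $\ell(w)$.

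For the inductive step, assume the theorem holds for some $w \in \sym{n}$ and consider $s_i$ with $s_i w > w$, so that there is a reduced expression for $s_i w$ beginning with $s_i$ and $\overline{\varpi}_{s_i w} = \overline{\varpi}_i \overline{\varpi}_w$. Then
\[
\overline{L}_{s_i w\lambda}(\zz;\beta) = \overline{\varpi}_i \, \overline{L}_{w\lambda}(\zz;\beta).
\]
Next, starting from $\overline{\varpi}_i = \varpi_i - 1$ and clearing denominators, I would verify the identity
\[
\overline{\varpi}_i f(\zz;\beta) = \frac{(1 + \beta z_i)\, z_{i+1}\bigl(f(\zz;\beta) - f(s_i\zz;\beta)\bigr)}{z_i - z_{i+1}},
\]
which is exactly the recursion in Lemma~\ref{lemma:recurrence_eq}. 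Applying $\overline{\varpi}_i$ to the induction hypothesis $\overline{L}_{w\lambda}(\zz;\beta) = Z(\overline{\states}_{\lambda,w};\zz;\beta)$ and invoking the lemma then yields $\overline{L}_{s_iw\lambda}(\zz;\beta) = Z(\overline{\states}_{\lambda,s_i w};\zz;\beta)$.

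For the base case $w = e$ one has $\overline{L}_\lambda(\zz;\beta) = \zz^\lambda$, so I must show $Z(\overline{\states}_{\lambda,e};\zz;\beta) = \zz^\lambda$. With $w = e$ the left boundary reads $w_0\cc = (c_n, c_{n-1}, \dotsc, c_1)$ from top to bottom, while the $i$-th $1$ on the top boundary carries color $c_i$. Since each admissible vertex in Figure~\ref{fig:colored_weights} has exactly $0$, $2$, or $4$ colored edges, propagating the fixed top-boundary colors downward through each column rigidifies the state: color $c_i$ must traverse row $n+1-i$ horizontally until reaching its exit column $\lambda_{n+1-i} + i$, turn once via a $\tt{c}_1$ vertex, and then rise straight to the top, crossing every other color exactly once at a $\tt{b}_1^{\dagger}$ vertex (so the colored strands realise the wiring diagram of the longest element). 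Crucially, no $\tt{a}_2$ vertex appears, because every row has a horizontal incoming color, forcing each internal vertex with vertical color to be either $\tt{b}_1^{\dagger}$ or $\tt{c}_1$ rather than $\tt{a}_2$; hence no marking choices arise and no $\beta$-factors enter. A row-by-row count shows that in row $r$ exactly $\lambda_r$ vertices have type $\tt{b}_2$, each contributing weight $z_r$, so the total weight is $\prod_{r=1}^n z_r^{\lambda_r} = \zz^\lambda$.

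The main obstacle is the base case rather than the induction. The operator identity matching $\overline{\varpi}_i$ to Lemma~\ref{lemma:recurrence_eq} is a one-line algebraic check. In contrast, the $w = e$ analysis demands careful verification that the admissibility rules really do force each vertex configuration (ruling out, for instance, a $\tt{b}_1$ where the propagation admits $\tt{b}_1^{\dagger}$) and, most importantly, that no $\tt{a}_2$ vertex can slip in — any such vertex would multiply the weight by $(1 + \beta z)$ and destroy the monomial answer. Once rigidity is established, the combinatorial counting of $\tt{b}_2$ vertices per row is a direct calculation.
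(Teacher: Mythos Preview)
Your proof is correct and follows exactly the same strategy as the paper's: induction on $\ell(w)$, with the inductive step reducing to the identity
\[
\overline{\varpi}_i f = \frac{(1+\beta z_i)z_{i+1}\bigl(f - s_i f\bigr)}{z_i - z_{i+1}}
\]
(which the paper also records explicitly) together with Lemma~\ref{lemma:recurrence_eq}, and the base case $Z(\overline{\states}_{\lambda,1};\zz;\beta)=\zz^{\lambda}$. The paper dismisses the base case as ``straightforward'' (and points to the ground state in Figure~\ref{fig:groundstate}); you supply the details, and your row-by-row rigidity argument and count of $\tt{b}_2$ vertices are accurate, so your concern that this is the ``main obstacle'' is perhaps overstated---once one walks through the top row (left edge $c_n$, top edges forcing $\tt{b}_1^{\dagger}$ at each $p_k$ with $k<n$ and $\tt{c}_1$ at $p_n$), the rest follows by the same one-line analysis in each successive row.
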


\begin{proof}
A direct computation using the definition of $\overline{\varpi}_i$ yields
\[
\overline{L}_{s_i w \lambda}(\zz;\beta) = \overline{\varpi}_i \overline{L}_{w\lambda}(\zz;\beta) 
= \frac{(1  + \beta z_i ) z_{i+1} \cdot \bigl( \overline{L}_{w\lambda}(\zz;\beta) -  \overline{L}_{w\lambda}(s_i \zz; \beta) \bigr)}{z_i - z_{i+1}}.
\]
It is straightforward to see that $Z(\overline{\states}_{1,\lambda}; \zz; \beta) = \zz^{\lambda} = \overline{L}_{\lambda}(\zz; \beta)$, where $1 \in \sym{n}$ is the identity element.
Therefore, the claim follows by induction and Lemma~\ref{lemma:recurrence_eq}.
\end{proof}

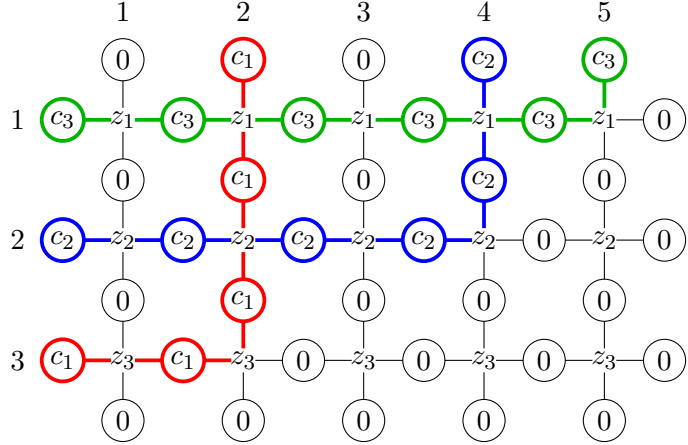
\begin{figure}[h]
  \begin{tikzpicture}[scale=0.80, font=\small]
    \foreach \y in {1,3,5}
        \draw (0,\y)--(10,\y);
    \foreach \x in {1,3,5,7,9}
      \draw (\x,0)--(\x,6);
    \draw[line width=0.5mm,darkgreen] (9,6) -- (9,5) -- (0,5);
    \draw[line width=0.5mm,blue] (7,6) -- (7,3) -- (0,3);
    \draw[line width=0.5mm,red] (3,6) -- (3,1) -- (0,1);
    \foreach \x in {0,2,4,6,8,10}
    {
      \draw[fill=white] (\x,5) circle (.35);
      \draw[fill=white] (\x,3) circle (.35);
      \draw[fill=white] (\x,1) circle (.35);
    }
    \foreach \x in {1,3,5,7,9}
    {
      \draw[fill=white] (\x,0) circle (.35);
      \draw[fill=white] (\x,2) circle (.35);
      \draw[fill=white] (\x,4) circle (.35);
      \draw[fill=white] (\x,6) circle (.35);
      \path[fill=white] (\x,1) circle (.2);
      \node at (\x,1) {$z_3$};
      \path[fill=white] (\x,3) circle (.2);
      \node at (\x,3) {$z_2$};
      \path[fill=white] (\x,5) circle (.2);
      \node at (\x,5) {$z_1$};
    }

    \node at (1,6) {$0$};
    \draw[line width=0.5mm,red,fill=white] (3,6) circle (.35); \node at (3,6) {$c_1$};
    \node at (5,6) {$0$};
    \draw[line width=0.5mm,blue,fill=white] (7,6) circle (.35); \node at (7,6) {$c_2$};
    \draw[line width=0.5mm,darkgreen,fill=white] (9,6) circle (.35); \node at (9,6) {$c_3$};
    \node at (1,4) {$0$};
    \draw[line width=0.5mm,red,fill=white] (3,4) circle (.35); \node at (3,4) {$c_1$};
    \node at (5,4) {$0$};
    \draw[line width=0.5mm,blue,fill=white] (7,4) circle (.35); \node at (7,4) {$c_2$};
    \node at (9,4) {$0$};
    \node at (1,2) {$0$};
    \draw[line width=0.5mm,red,fill=white] (3,2) circle (.35);, \node at (3,2) {$c_1$};
    \node at (5,2) {$0$};
    \node at (7,2) {$0$};
    \node at (9,2) {$0$};
    \node at (1,0) {$0$};
    \node at (3,0) {$0$};
    \node at (5,0) {$0$};
    \node at (7,0) {$0$};
    \node at (9,0) {$0$};
    \draw[line width=0.5mm,darkgreen,fill=white] (0,5) circle (.35); \node at (0,5) {$c_3$};
    \draw[line width=0.5mm,darkgreen,fill=white] (2,5) circle (.35); \node at (2,5) {$c_3$};
    \draw[line width=0.5mm,darkgreen,fill=white] (4,5) circle (.35); \node at (4,5) {$c_3$};
    \draw[line width=0.5mm,darkgreen,fill=white] (6,5) circle (.35); \node at (6,5) {$c_3$};
    \draw[line width=0.5mm,darkgreen,fill=white] (8,5) circle (.35); \node at (8,5) {$c_3$};
    \node at (10,5) {$0$};
    \draw[line width=0.5mm,blue,fill=white] (0,3) circle (.35); \node at (0,3) {$c_2$};
    \draw[line width=0.5mm,blue,fill=white] (2,3) circle (.35); \node at (2,3) {$c_2$};
    \draw[line width=0.5mm,blue,fill=white] (4,3) circle (.35); \node at (4,3) {$c_2$};
    \draw[line width=0.5mm,blue,fill=white] (6,3) circle (.35); \node at (6,3) {$c_2$};
    \node at (8,3) {$0$};
    \node at (10,3) {$0$};
    \draw[line width=0.5mm,red,fill=white] (0,1) circle (.35); \node at (0,1) {$c_1$};
    \draw[line width=0.5mm,red,fill=white] (2,1) circle (.35); \node at (2,1) {$c_1$};
    \node at (4,1) {$0$};
    \node at (6,1) {$0$};
    \node at (8,1) {$0$};
    \node at (10,1) {$0$};
    \node at (1.00,6.8) {$ 1$};
    \node at (3.00,6.8) {$ 2$};
    \node at (5.00,6.8) {$ 3$};
    \node at (7.00,6.8) {$ 4$};
    \node at (9.00,6.8) {$ 5$};
    \node at (-.75,1) {$ 3$};
    \node at (-.75,3) {$ 2$};
    \node at (-.75,5) {$ 1$};
  \end{tikzpicture}
  \caption{The unique ``ground'' state for the colored system $\overline{\states}_{\lambda,1}$ with $m =5$, $n=3$, and $\lambda = (2,2,1)$.
    We use colors ${\color{red} c_1} > {\color{blue} c_2} >  {\color{darkgreen} c_3}$.
    The Boltzmann weight of this state is $z_1^2z_2^2z_3$.}
  \label{fig:groundstate}
\end{figure}

\begin{figure}[h]
\begin{tikzpicture}[scale=0.80, font=\small]
    \foreach \y in {1,3,5}
        \draw (0,\y)--(16,\y);
    \foreach \x in {1,3,5,7,9,11,13,15}
      \draw (\x,0)--(\x,6);
    \draw[line width=0.5mm,darkgreen] (13,6) -- (13,5) -- (5,5) -- (5,3) -- (0,3);
    \draw[line width=0.5mm,blue] (7,6) -- (7,3) -- (5,3) -- (5,1) -- (0,1);
    \draw[line width=0.5mm,red] (3,6) -- (3,5) -- (0,5);
    \foreach \x in {0,2,4,6,8,10,12,14,16}
    {
      \draw[fill=white] (\x,5) circle (.35);
      \draw[fill=white] (\x,3) circle (.35);
      \draw[fill=white] (\x,1) circle (.35);
    }
    \foreach \x in {1,3,5,7,9,11,13,15}
    {
      \draw[fill=white] (\x,0) circle (.35);
      \draw[fill=white] (\x,2) circle (.35);
      \draw[fill=white] (\x,4) circle (.35);
      \draw[fill=white] (\x,6) circle (.35);
      \path[fill=white] (\x,1) circle (.2);
      \node at (\x,1) {$z_3$};
      \path[fill=white] (\x,3) circle (.2);
      \node at (\x,3) {$z_2$};
      \path[fill=white] (\x,5) circle (.2);
      \node at (\x,5) {$z_1$};
    }

    \node at (1,6) {$0$};
    \draw[line width=0.5mm,red,fill=white] (3,6) circle (.35); \node at (3,6) {$c_1$};
    \node at (5,6) {$0$};
    \draw[line width=0.5mm,blue,fill=white] (7,6) circle (.35); \node at (7,6) {$c_2$};
    \node at (9,6) {$0$};
    \node at (11,6) {$0$};
    \draw[line width=0.5mm,darkgreen,fill=white] (13,6) circle (.35); \node at (13,6) {$c_3$};
    \node at (15,6) {$0$};
    \node at (1,4) {$0$};
    \node at (3,4) {$0$};
    \draw[line width=0.5mm,darkgreen,fill=white] (5,4) circle (.35); \node at (5,4) {$c_3$};
    \draw[line width=0.5mm,blue,fill=white] (7,4) circle (.35); \node at (7,4) {$c_2$};
    \node at (9,4) {$0$};
    \node at (11,4) {$0$};
    \node at (13,4) {$0$};
    \node at (15,4) {$0$};
    \node at (1,2) {$0$};
    \node at (3,2) {$0$};
    \draw[line width=0.5mm,blue,fill=white] (5,2) circle (.35); \node at (5,2) {$c_2$};
    \node at (7,2) {$0$};
    \node at (9,2) {$0$};
    \node at (11,2) {$0$};
    \node at (13,2) {$0$};
    \node at (15,2) {$0$};
    \node at (1,0) {$0$};
    \node at (3,0) {$0$};
    \node at (5,0) {$0$};
    \node at (7,0) {$0$};
    \node at (9,0) {$0$};
    \node at (11,0) {$0$};
    \node at (13,0) {$0$};
    \node at (15,0) {$0$};
    \draw[line width=0.5mm,red,fill=white] (0,5) circle (.35); \node at (0,5) {$c_1$};
    \draw[line width=0.5mm,red,fill=white] (2,5) circle (.35); \node at (2,5) {$c_1$};
    \node at (4,5) {$0$};
    \draw[line width=0.5mm,darkgreen,fill=white] (6,5) circle (.35); \node at (6,5) {$c_3$};
    \draw[line width=0.5mm,darkgreen,fill=white] (8,5) circle (.35); \node at (8,5) {$c_3$};
    \draw[line width=0.5mm,darkgreen,fill=white] (10,5) circle (.35); \node at (10,5) {$c_3$};
    \draw[line width=0.5mm,darkgreen,fill=white] (12,5) circle (.35); \node at (12,5) {$c_3$};
    \node at (14,5) {$0$};
    \node at (16,5) {$0$};
    \draw[line width=0.5mm,darkgreen,fill=white] (0,3) circle (.35); \node at (0,3) {$c_3$};
    \draw[line width=0.5mm,darkgreen,fill=white] (2,3) circle (.35); \node at (2,3) {$c_3$};
    \draw[line width=0.5mm,darkgreen,fill=white] (4,3) circle (.35); \node at (4,3) {$c_3$};
    \draw[line width=0.5mm,blue,fill=white] (6,3) circle (.35); \node at (6,3) {$c_2$};
    \node at (8,3) {$0$};
    \node at (10,3) {$0$};
    \node at (12,3) {$0$};
    \node at (14,3) {$0$};
    \node at (16,3) {$0$};
    \draw[line width=0.5mm,blue,fill=white] (0,1) circle (.35); \node at (0,1) {$c_2$};
    \draw[line width=0.5mm,blue,fill=white] (2,1) circle (.35); \node at (2,1) {$c_2$};
    \draw[line width=0.5mm,blue,fill=white] (4,1) circle (.35); \node at (4,1) {$c_2$};
    \node at (6,1) {$0$};
    \node at (8,1) {$0$};
    \node at (10,1) {$0$};
    \node at (12,1) {$0$};
    \node at (14,1) {$0$};
    \node at (16,1) {$0$};
    \node at (1.00,6.8) {$ 1$};
    \node at (3.00,6.8) {$ 2$};
    \node at (5.00,6.8) {$ 3$};
    \node at (7.00,6.8) {$ 4$};
    \node at (9.00,6.8) {$ 5$};
    \node at (11.00,6.8) {$ 6$};
    \node at (13.00,6.8) {$ 7$};
    \node at (15.00,6.8) {$ 8$};
    \node at (-.75,1) {$ 3$};
    \node at (-.75,3) {$ 2$};
    \node at (-.75,5) {$ 1$};
  \end{tikzpicture}
  \caption{A state for the colored system $\overline{\states}_{\lambda,s_1 s_2}$, with $m =8$, $n=3$, and $\lambda = (4,2,1)$.
    We use colors ${\color{red} c_1} > {\color{blue} c_2} >  {\color{darkgreen} c_3}$.
    The Boltzmann weight of this state is $(1+\beta z_1)z_1^3 z_2^2 z_3^2$.}
  \label{fig:coloredstate}
\end{figure}
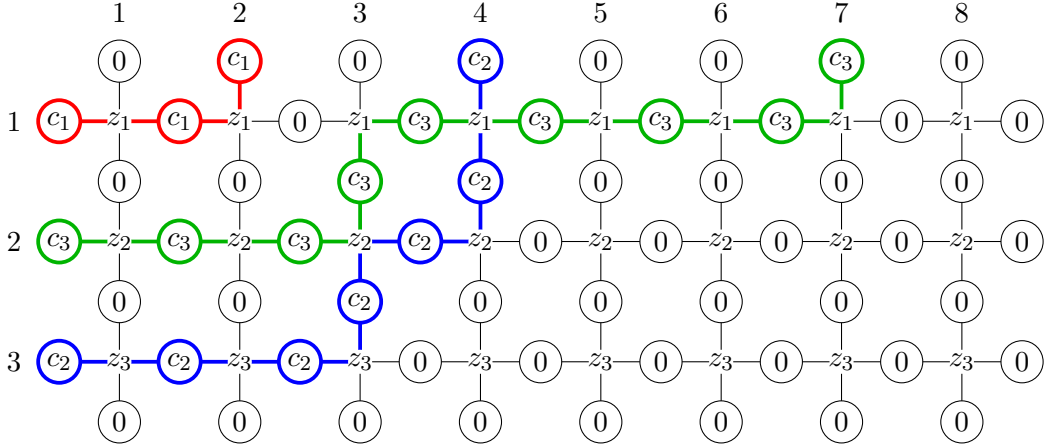

Let $\states_{\lambda,w}$ denote the same model as $\overline{\states}_{\lambda,w}$ with the additional colored configurations
\begin{gather*}
\tt{b}'_1
\\
\begin{tikzpicture}[baseline=0]
\coordinate (a) at (-.75, 0);
\coordinate (b) at (0, .75);
\coordinate (c) at (.75, 0);
\coordinate (d) at (0, -.75);
\coordinate (aa) at (-.75,.5);
\coordinate (cc) at (.75,.5);
\draw[line width=0.5mm, red] (a)--(0,0);
\draw[line width=0.6mm, red] (b)--(0,0);
\draw[line width=0.5mm, blue] (c)--(0,0);
\draw[line width=0.6mm, blue] (d)--(0,0);
\draw[line width=0.5mm, red,fill=white] (a) circle (.25);
\draw[line width=0.5mm, red,fill=white] (b) circle (.25);
\draw[line width=0.5mm, blue, fill=white] (c) circle (.25);
\draw[line width=0.5mm, blue, fill=white] (d) circle (.25);
\node at (0,1) { };
\node at (a) {$c_i$};
\node at (b) {$c_i$};
\node at (c) {$c_j$};
\node at (d) {$c_j$};
\path[fill=white] (0,0) circle (.2);
\node at (0,0) {$z$};
\end{tikzpicture}
\\
1
\end{gather*}
whenever the colors ${\color{red} c_i} > {\color{blue} c_j}$ do not cross in $\overline{\states}_{\lambda,w}$; \textit{i.e.}, $(i, j)$ is not an inversion of $w w_0$.
In this case, we note that we can remove configurations $\tt{b}^{\dagger}$ and $\tt{b}_1$ for colors ${\color{red} c_i} > {\color{blue} c_j}$ from the model without changing the possible states.
We extend the definition of the $R$-matrix by using those in Figure~\ref{fig:colored_R_matrix} except we require the bottom left two configurations to have colors ${\color{red} c_i}$ and ${\color{blue} c_j}$ cross and we add the additional two configurations for when they do not cross:
\begin{equation}
\label{eq:noncrossing_weights}
\begin{array}{c@{\hspace{80pt}}c}
\begin{tikzpicture}[scale=0.7]
\draw (0,0) to [out = 0, in = 180] (2,2);
\draw (0,2) to [out = 0, in = 180] (2,0);
\draw[line width=0.5mm, blue, fill=white] (0,0) circle (.35);
\draw[line width=0.5mm, red, fill=white] (0,2) circle (.35);
\draw[line width=0.5mm, red, fill=white] (2,2) circle (.35);
\draw[line width=0.5mm, blue, fill=white] (2,0) circle (.35);
\node at (0,0) {$c_j$};
\node at (0,2) {$c_i$};
\node at (2,2) {$c_i$};
\node at (2,0) {$c_j$};
\path[fill=white] (1,1) circle (.3);
\node at (1,1) {$z_i,z_j$};
\end{tikzpicture}
&
\begin{tikzpicture}[scale=0.7]
\draw (0,0) to [out = 0, in = 180] (2,2);
\draw (0,2) to [out = 0, in = 180] (2,0);
\draw[line width=0.5mm, red, fill=white] (0,0) circle (.35);
\draw[line width=0.5mm, blue, fill=white] (0,2) circle (.35);
\draw[line width=0.5mm, blue, fill=white] (2,2) circle (.35);
\draw[line width=0.5mm, red, fill=white] (2,0) circle (.35);
\node at (0,0) {$c_i$};
\node at (0,2) {$c_j$};
\node at (2,2) {$c_j$};
\node at (2,0) {$c_i$};
\path[fill=white] (1,1) circle (.3);
\node at (1,1) {$z_i,z_j$};
\end{tikzpicture} \\
(1 + \beta z_i) z_j & (1 + \beta z_j) z_ i
\end{array}
\end{equation}
(We have simply interchanged the bottom left two Boltzmann weights from the $R$-matrix in Figure~\ref{fig:colored_R_matrix}.)
Indeed, this allows us to show the model is integrable.

\begin{proposition}
\label{prop:Lascoux_YBE}
Consider the modified $L$-matrix and $R$-matrix given above for $\states_{\lambda,w}$.
The partition function of the following two models given by~\eqref{eq:RLL_relation} are equal for any boundary conditions $a,b,c,d,e,f \in \{ 0, c_1, \dotsc, c_n \}$ and valid crossings coming from a wiring diagram.
\end{proposition}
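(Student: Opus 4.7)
The plan is to mirror the proof of Proposition~\ref{prop:YBE}: at most three color indices appear in any single instance of the Yang--Baxter equation, so both sides of~\eqref{eq:RLL_relation} expand to entries of $4^3 \times 4^3$ matrices built from the modified $L$-matrix and modified $R$-matrix, and the identity reduces to a finite polynomial check that can be delegated to a computer, for example by adapting the code in Appendix~\ref{sec:sage_code}.

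The key organizational observation is that the restriction to ``valid crossings from a wiring diagram'' decouples cleanly along color pairs. Fix any triple of incoming boundary labels. The wiring diagram determines a permutation of the three colored strands entering the YBE triple, and this permutation is the same on the left and right sides of~\eqref{eq:RLL_relation}. Hence for each pair $c_i > c_j$ appearing in the triple, either both sides of the equation require the $c_i, c_j$ strands to cross, or both sides require them to pass by without crossing. We may therefore treat the crossing datum for each color pair as a fixed parameter and verify the equation case by case.

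When every color pair crosses, the admissible configurations are exactly those used in Proposition~\ref{prop:YBE}, so that result gives the identity immediately. When some pair $c_i > c_j$ does not cross, the only admissible $L$-vertex having exactly these two colors is $\tt{b}'_1$ (of Boltzmann weight $1$, matching $\tt{b}_1$), and the two $R$-matrix configurations in which the colors $c_i, c_j$ interact carry the swapped Boltzmann weights of~\eqref{eq:noncrossing_weights}. The resulting polynomial identity for each choice of boundary labels is again a finite check of the same flavor as Proposition~\ref{prop:YBE}. Together, the crossing and non-crossing cases cover every valid crossing pattern allowed by a wiring diagram.

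The main obstacle I anticipate is purely bookkeeping: one must confirm that the crossing or non-crossing status of each of the three color pairs appearing in a triple is applied consistently at all three vertices of the YBE triple on both sides of~\eqref{eq:RLL_relation}, so that no admissible configuration is silently omitted and no inadmissible one is included. Once the admissible configurations are correctly enumerated according to the fixed wiring, each resulting polynomial identity is elementary and the computer verification of Proposition~\ref{prop:YBE} extends directly.
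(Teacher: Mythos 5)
Your proposal matches the paper's proof: the paper likewise reduces the statement to the finite check of Proposition~\ref{prop:YBE} carried out separately over all crossing patterns realizable by a wiring diagram, observing (as your ``permutation of the three strands'' framing implicitly does) that $c_1$ and $c_3$ cannot cross unless one of the adjacent pairs also crosses, so only the patterns coming from the six permutations of $\sym{3}$ need to be verified. Your case split into all-crossing versus some-pair-non-crossing, with the latter using the swapped weights of~\eqref{eq:noncrossing_weights}, is exactly the computation the paper delegates to the computer.
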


\begin{proof}
The proof is similar to the proof of Proposition~\ref{prop:YBE} except we go over all possible valid crossings of colors ${\color{red}c_i}$ and ${\color{blue}c_j}$ and notice that the resulting $R$-matrix agrees for all such valid crossings.
Note that we cannot have color $c_1$ and $c_3$ cross without either $c_1$ and $c_2$ crossing or $c_2$ and $c_3$ crossing, which can be observed by looking at the $6$ permutations of $\sym{3}$.
\end{proof}

\begin{theorem}
\label{thm:5vertex_Lascoux}
We have
\[
L_{w\lambda}(\zz; \beta) = Z(\states_{\lambda,w}; \zz; \beta).
\]
\end{theorem}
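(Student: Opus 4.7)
The plan is to adapt the proof of Theorem~\ref{thm:5vertex_atoms} using the modified Yang--Baxter equation from Proposition~\ref{prop:Lascoux_YBE}. First I would establish the analogous recurrence
\[
Z(\states_{\lambda, s_i w}; \zz; \beta) = \varpi_i Z(\states_{\lambda, w}; \zz; \beta) \quad \text{whenever } s_i w > w,
\]
and then deduce the theorem by induction on $\ell(w)$. The base case $w=1$ is immediate: when $w w_0 = w_0$, every pair $(i,j)$ with $i<j$ is an inversion, so no $\tt{b}'_1$ vertex is added, $\states_{\lambda, 1}$ coincides with $\overline{\states}_{\lambda, 1}$, and Theorem~\ref{thm:5vertex_atoms} gives $Z(\states_{\lambda, 1}; \zz; \beta) = \zz^\lambda = L_\lambda(\zz; \beta)$.

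To prove the recurrence, I would run the train argument just as in Lemma~\ref{lemma:recurrence_eq}. Attaching $R(z_i, z_{i+1})$ to the right of $\states_{\lambda, w}$ with zero right-boundary entries forces the unique all-zero admissible configuration at the $R$-vertex with weight $(1 + \beta z_i) z_{i+1}$, so $Z(\model; \zz; \beta) = (1 + \beta z_i) z_{i+1} Z(\states_{\lambda, w}; s_i \zz; \beta)$. Propagating $R$ to the left via Proposition~\ref{prop:Lascoux_YBE} leaves an $R$-vertex on the left interacting with the boundary colors $d_i, d_{i+1}$; two admissible configurations arise, a non-crossing one contributing to $Z(\states_{\lambda, w}; \zz; \beta)$ and a crossing one contributing to $Z(\states_{\lambda, s_i w}; \zz; \beta)$.

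The critical difference from the atom case is that the non-crossing weights in the modified $R$-matrix, specified in~\eqref{eq:noncrossing_weights}, are interchanged relative to the corresponding weights in Figure~\ref{fig:colored_R_matrix}. Consequently the non-crossing contribution carries weight $z_i(1 + \beta z_{i+1})$ rather than $(1 + \beta z_i) z_{i+1}$ (the value arising in Lemma~\ref{lemma:recurrence_eq}), while the crossing contribution retains weight $z_{i+1} - z_i$. This yields
\[
(1 + \beta z_i) z_{i+1} Z(\states_{\lambda, w}; s_i \zz; \beta) = z_i(1 + \beta z_{i+1}) Z(\states_{\lambda, w}; \zz; \beta) + (z_{i+1} - z_i) Z(\states_{\lambda, s_i w}; \zz; \beta),
\]
which rearranges to precisely $Z(\states_{\lambda, s_i w}; \zz; \beta) = \varpi_i Z(\states_{\lambda, w}; \zz; \beta)$ by the defining formula for $\varpi_i$. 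Thus the swap of the non-crossing weights in~\eqref{eq:noncrossing_weights} is exactly what upgrades the atom operator $\overline{\varpi}_i = \varpi_i - 1$ produced in Lemma~\ref{lemma:recurrence_eq} to the full Demazure--Lascoux operator $\varpi_i$.

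The main obstacle is this weight calculation: one must carefully identify the two admissible $R$-configurations emerging after propagation and verify that the modified non-crossing weight is indeed $z_i(1 + \beta z_{i+1})$ in the orientation dictated by the boundary colors $d_i, d_{i+1}$, together with checking that the bijection between combined-model states and $\states_{\lambda, s_i w}$-states respects the added $\tt{b}'_1$ vertices permitted by the modified $L$-matrix. Once the weights are pinned down, the induction is routine: for $\ell(w)\ge 1$, write $w = s_i u$ with $\ell(u) = \ell(w) - 1$, so $s_i u > u$, and combine the recurrence with the inductive hypothesis $Z(\states_{\lambda, u}; \zz; \beta) = L_{u\lambda}(\zz; \beta)$ to obtain $Z(\states_{\lambda, w}; \zz; \beta) = \varpi_i L_{u\lambda}(\zz; \beta) = L_{w\lambda}(\zz; \beta)$.
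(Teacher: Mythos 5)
Your proposal is correct and follows essentially the same route as the paper's first proof (the train argument): the same base case $\states_{\lambda,1}=\overline{\states}_{\lambda,1}$, the same propagation of $R(z_i,z_{i+1})$ via Proposition~\ref{prop:Lascoux_YBE}, and the same observation that the interchanged non-crossing weights in~\eqref{eq:noncrossing_weights} turn the recursion of Lemma~\ref{lemma:recurrence_eq} into the full operator $\varpi_i$ rather than $\overline{\varpi}_i$. The paper additionally offers a second, purely combinatorial proof via the atom decomposition~\eqref{eq:Lascoux_into_atoms}, but your single argument suffices.
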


We now give two proofs of this result.
The first is using the train argument as we used to prove Theorem~\ref{thm:5vertex_atoms}, and the second is combinatorial and applying Theorem~\ref{thm:5vertex_atoms}.

\begin{proof}[Proof using the train argument]
We have $Z(\states_{\lambda, s_i w}; \zz; \beta) = \varpi_i Z(\states_{\lambda, w}; \zz; \beta)$ since
\[
Z(\states_{\lambda, s_i w}; \zz; \beta) = \frac{(1  + \beta z_{i+1}) z_i Z(\states_{\lambda, w}; \zz; \beta) -  (1 + \beta z_i) z_{i+1} Z(\states_{\lambda, w}; s_i \zz; \beta) \bigr)}{z_i - z_{i+1}}
\]
as in the proof of Lemma~\ref{lemma:recurrence_eq} with
\begin{align*}
(1 + \beta z_i) z_{i+1} Z(\states_{\lambda,w}; s_i \zz; \beta) & = Z(\model; \zz; \beta) = Z(\model; \zz; \beta)
\\ & = (1 + \beta z_{i+1}) z_i Z(\states_{\lambda,w}; \zz; \beta) +  (z_{i+1} - z_i) Z(\states_{\lambda,s_i w}; \zz; \beta)
\end{align*}
and noting $c_i$ and $c_{i+1}$ do not cross as $\ell(s_i w) = \ell(w) + 1$.
\end{proof}

\begin{proof}[Combinatorial proof]
Replacing any particular configuration $\tt{b}'_1$ for colors ${\color{red}c_i}$ and ${\color{blue}c_j}$ in a state of $\states_{\lambda,w}$ with the configuration $\tt{b}^{\dagger}_1$ such that all resulting states are valid in $\states_{\lambda,u}$ corresponds to having $u < w$ (note that we necessarily have $\ell(w) = \ell(u) + 1$ and this is will be a cover in Bruhat order).
We can also see this by considering this as adding a crossing from the corresponding wiring diagram of $w w_0 < u w_0$ and that multiplying by $w_0$ is equivalent to taking the dual of Bruhat order (see, \textit{e.g.},~\cite[Prop.~2.3.2]{BB05}).
This swap does not result in an invalid state since the colors do not cross (\textit{i.e.}, there is no $\tt{b}^{\dagger}$ for these colors).
If we remove the northeast most $\tt{b}'_1$ for colors ${\color{red}c_i}$ and ${\color{blue}c_j}$ and all other touch points between the colors becomes $\tt{b}_1$, and note that there are no other $\tt{b}'_1$ for colors ${\color{red}c_i}$ and ${\color{blue}c_j}$ in the resulting state.
To see the reverse containment, we note that there might be a third color ${\color{darkgreen}c_k}$ that would be forced to cross one either ${\color{red}c_i}$ or ${\color{blue}c_j}$ twice after the swap, but we can resolve the crossings to obtain a valid state as before.
Therefore, the claim follows from Theorem~\ref{thm:5vertex_atoms},
\[
L_{w\lambda}(\zz; \beta) = \sum_{u \leq w} \overline{L}_{u\lambda}(\zz; \beta),
\]
and a straightforward induction on the length of $w$.
\end{proof}

Note that we could use the train argument proof and then use the combinatorial proof to show
\[
L_{w\lambda}(\zz; \beta) = \sum_{u \leq w} \overline{L}_{u\lambda}(\zz; \beta)
\]
as a consequence (thus yielding an alternative proof of~\cite[Thm.~5.1]{Monical16}).

\begin{example}
We consider replacing the $\tt{b}^{\dagger}$ configuration corresponding to ${\color{red}c_1}$ and ${\color{darkgreen}c_3}$ in $\states_{\emptyset,1}$ for $m = n = 3$ with colors ${\color{red}c_1} > {\color{blue}c_2} > {\color{darkgreen}c_3}$.
This introduces a double crossing of the colors ${\color{blue}c_2}$ and ${\color{darkgreen}c_3}$.
We can then resolve this to a valid state in $\states_{\emptyset,w_0}$ by
\[
  \begin{tikzpicture}[scale=0.80, font=\small,baseline=0]
    \foreach \y in {1,3,5}
        \draw (0,\y)--(6,\y);
    \foreach \x in {1,3,5}
      \draw (\x,0)--(\x,6);
    \draw[line width=0.5mm,darkgreen] (5,6) -- (5,5) -- (1,5) -- (1,1) -- (0,1);
    \draw[line width=0.5mm,blue] (3,6) -- (3,3) -- (0,3);
    \draw[line width=0.5mm,red] (1,6) -- (1,5) -- (0,5);
    \foreach \x in {0,2,4,6}
    {
      \draw[fill=white] (\x,5) circle (.35);
      \draw[fill=white] (\x,3) circle (.35);
      \draw[fill=white] (\x,1) circle (.35);
    }
    \foreach \x in {1,3,5}
    {
      \draw[fill=white] (\x,0) circle (.35);
      \foreach \y in {1,2,3} {
        \draw[fill=white] (\x,2*\y) circle (.35);
        \path[fill=white] (\x,7-2*\y) circle (.2);
        \node at (\x,7-2*\y) {$z_{\y}$};
      }
    }

    \draw[line width=0.5mm,red,fill=white] (1,6) circle (.35); \node at (1,6) {$c_1$};
    \draw[line width=0.5mm,blue,fill=white] (3,6) circle (.35); \node at (3,6) {$c_2$};
    \draw[line width=0.5mm,darkgreen,fill=white] (5,6) circle (.35); \node at (5,6) {$c_3$};
    \draw[line width=0.5mm,darkgreen,fill=white] (1,4) circle (.35); \node at (1,4) {$c_3$};
    \draw[line width=0.5mm,blue,fill=white] (3,4) circle (.35); \node at (3,4) {$c_2$};
    \node at (5,4) {$0$};
    \draw[line width=0.5mm,darkgreen,fill=white] (1,2) circle (.35);, \node at (1,2) {$c_3$};
    \node at (3,2) {$0$};
    \node at (5,2) {$0$};
    \node at (1,0) {$0$};
    \node at (3,0) {$0$};
    \node at (5,0) {$0$};
    \draw[line width=0.5mm,red,fill=white] (0,5) circle (.35); \node at (0,5) {$c_1$};
    \draw[line width=0.5mm,darkgreen,fill=white] (2,5) circle (.35); \node at (2,5) {$c_3$};
    \draw[line width=0.5mm,darkgreen,fill=white] (4,5) circle (.35); \node at (4,5) {$c_3$};
    \node at (6,5) {$0$};
    \draw[line width=0.5mm,blue,fill=white] (0,3) circle (.35); \node at (0,3) {$c_2$};
    \draw[line width=0.5mm,blue,fill=white] (2,3) circle (.35); \node at (2,3) {$c_2$};
    \node at (4,3) {$0$};
    \node at (6,3) {$0$};
    \draw[line width=0.5mm,darkgreen,fill=white] (0,1) circle (.35); \node at (0,1) {$c_3$};
    \node at (2,1) {$0$};
    \node at (4,1) {$0$};
    \node at (6,1) {$0$};
    \node at (1.00,6.8) {$ 1$};
    \node at (3.00,6.8) {$ 2$};
    \node at (5.00,6.8) {$ 3$};
    \node at (-.75,1) {$ 3$};
    \node at (-.75,3) {$ 2$};
    \node at (-.75,5) {$ 1$};
    
    \draw[->] (7,3) -- (8.5,3);
    
    \begin{scope}[xshift=10cm]
    \foreach \y in {1,3,5}
        \draw (0,\y)--(6,\y);
    \foreach \x in {1,3,5}
      \draw (\x,0)--(\x,6);
    \draw[line width=0.5mm,darkgreen] (5,6) -- (5,5) -- (3,5) -- (3,3) -- (1,3) -- (1,1) -- (0,1);
    \draw[line width=0.5mm,blue] (3,6) -- (3,5) -- (1,5) -- (1,3) -- (0,3);
    \draw[line width=0.5mm,red] (1,6) -- (1,5) -- (0,5);
    \foreach \x in {0,2,4,6}
    {
      \draw[fill=white] (\x,5) circle (.35);
      \draw[fill=white] (\x,3) circle (.35);
      \draw[fill=white] (\x,1) circle (.35);
    }
    \foreach \x in {1,3,5}
    {
      \draw[fill=white] (\x,0) circle (.35);
      \foreach \y in {1,2,3} {
        \draw[fill=white] (\x,2*\y) circle (.35);
        \path[fill=white] (\x,7-2*\y) circle (.2);
        \node at (\x,7-2*\y) {$z_{\y}$};
      }
    }

    \draw[line width=0.5mm,red,fill=white] (1,6) circle (.35); \node at (1,6) {$c_1$};
    \draw[line width=0.5mm,blue,fill=white] (3,6) circle (.35); \node at (3,6) {$c_2$};
    \draw[line width=0.5mm,darkgreen,fill=white] (5,6) circle (.35); \node at (5,6) {$c_3$};
    \draw[line width=0.5mm,blue,fill=white] (1,4) circle (.35); \node at (1,4) {$c_2$};
    \draw[line width=0.5mm,darkgreen,fill=white] (3,4) circle (.35); \node at (3,4) {$c_3$};
    \node at (5,4) {$0$};
    \draw[line width=0.5mm,darkgreen,fill=white] (1,2) circle (.35);, \node at (1,2) {$c_3$};
    \node at (3,2) {$0$};
    \node at (5,2) {$0$};
    \node at (1,0) {$0$};
    \node at (3,0) {$0$};
    \node at (5,0) {$0$};
    \draw[line width=0.5mm,red,fill=white] (0,5) circle (.35); \node at (0,5) {$c_1$};
    \draw[line width=0.5mm,blue,fill=white] (2,5) circle (.35); \node at (2,5) {$c_2$};
    \draw[line width=0.5mm,darkgreen,fill=white] (4,5) circle (.35); \node at (4,5) {$c_3$};
    \node at (6,5) {$0$};
    \draw[line width=0.5mm,blue,fill=white] (0,3) circle (.35); \node at (0,3) {$c_2$};
    \draw[line width=0.5mm,darkgreen,fill=white] (2,3) circle (.35); \node at (2,3) {$c_3$};
    \node at (4,3) {$0$};
    \node at (6,3) {$0$};
    \draw[line width=0.5mm,darkgreen,fill=white] (0,1) circle (.35); \node at (0,1) {$c_3$};
    \node at (2,1) {$0$};
    \node at (4,1) {$0$};
    \node at (6,1) {$0$};
    \node at (1.00,6.8) {$ 1$};
    \node at (3.00,6.8) {$ 2$};
    \node at (5.00,6.8) {$ 3$};
    \node at (-.75,1) {$ 3$};
    \node at (-.75,3) {$ 2$};
    \node at (-.75,5) {$ 1$};
  \end{scope}
  \end{tikzpicture}
\]
\end{example}

Finally, we construct another variation on the model $\overline{\states}_{\lambda,w}$ where instead of adding $\tt{b}'_1$ for certain colors (and removing the corresponding $\tt{b}_1$ and $\tt{b}^{\dagger}$), we replace $\tt{b}_1$ with $\tt{b}'_1$ for all colors.
Let $\states'_{\lambda,w}$ denote this modified model.
We also use the following $R$-matrix given by Figure~\ref{fig:colored_R_matrix} except we \emph{replace} the two bottom left configurations by Equation~\eqref{eq:noncrossing_weights}.
This satisfies the Yang--Baxter equation (the proof is the same as Proposition~\ref{prop:YBE}):

\begin{proposition}
\label{prop:Lascoux_prime_YBE}
Consider the modified $L$-matrix and $R$-matrix given above for $\states'_{\lambda,w}$.
The partition function of the following two models given by~\eqref{eq:RLL_relation} are equal for any boundary conditions $a,b,c,d,e,f \in \{ 0, c_1, \dotsc, c_n \}$.
\end{proposition}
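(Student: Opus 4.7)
The plan is to verify the RLL identity~\eqref{eq:RLL_relation} directly, exactly as in the proof of Proposition~\ref{prop:YBE}. Since \eqref{eq:RLL_relation} has six external edges and each edge label lies in $\{0,c_1,c_2,c_3\}$ (any configuration uses at most three distinct colors), the equation is an identity of two $4^3\times 4^3$ matrices, obtained by assigning boundary labels $a,b,c,d,e,f$ and summing the Boltzmann weights over all admissible fillings of the three internal edges. This is a finite computation and, as in Proposition~\ref{prop:YBE}, can be carried out by computer using a suitable adaptation of the script in Appendix~\ref{sec:sage_code}.

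First I would record precisely how the $L$- and $R$-matrices have been altered relative to Proposition~\ref{prop:YBE}: in the $L$-matrix the admissible vertex $\tt{b}_1$ is replaced \emph{uniformly} (for all ordered color pairs ${\color{red}c_i}>{\color{blue}c_j}$) by the vertex $\tt{b}'_1$, while in the $R$-matrix the two bottom-left entries of Figure~\ref{fig:colored_R_matrix} are replaced uniformly by the configurations of~\eqref{eq:noncrossing_weights}. The crucial point, and the reason the proof should be the same as Proposition~\ref{prop:YBE} rather than Proposition~\ref{prop:Lascoux_YBE}, is that both swaps are color-uniform: there is no wiring-diagram constraint filtering which pairs receive the swap, so no case analysis over valid crossings is needed. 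In particular, $\tt{b}_1$ and $\tt{b}'_1$ carry the same Boltzmann weight $1$, so at the level of weighted configurations the $L$-side change amounts only to a relabeling of the admissible coloring patterns.

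Next I would match these uniform swaps on the two sides of~\eqref{eq:RLL_relation}. Given any boundary data $a,b,c,d,e,f$, I would track how the admissible internal fillings of each side are bijected under replacing $\tt{b}_1$ by $\tt{b}'_1$ and applying~\eqref{eq:noncrossing_weights}, then compare the resulting weighted sums to the corresponding ones in the already-established identity of Proposition~\ref{prop:YBE}. The contributions reshuffle across cases but cancel in pairs in precisely the same way they did there, because the Yang--Baxter equation is a polynomial identity that is preserved under any simultaneous consistent relabeling of matched vertices on both $L$ and $R$.

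The main (and essentially only) obstacle is administrative: making sure that for every boundary $a,b,c,d,e,f$ I have correctly identified which internal configurations are affected by the swap, so that no inadmissible contribution is silently added and no previously admissible one is silently dropped. Since the swap affects only the two bottom-left $R$-matrix entries and the single $L$-matrix entry $\tt{b}_1$, this bookkeeping is mechanical, and I would discharge it by running the check in \textsc{SageMath} with the modified weight tables, which reduces Proposition~\ref{prop:Lascoux_prime_YBE} to a routine finite verification.
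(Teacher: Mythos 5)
Your proposal is correct and matches the paper's proof, which simply observes that the identity is again a finite check of $4^3\times 4^3$ matrices carried out by computer (the appendix notes the one-line change to the \textsc{SageMath} script implementing the uniform replacement of $\tt{b}_1$ by $\tt{b}'_1$). Your middle paragraph's claim that the Yang--Baxter equation is automatically ``preserved under relabeling'' is not itself a proof --- the swapped $R$-matrix entries carry genuinely interchanged weights, not just relabeled colors --- but this is not load-bearing since you correctly reduce everything to the direct finite verification.
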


\begin{theorem}
\label{thm:modified2}
We have
\[
L_{w\lambda}(\zz; \beta) = Z(\states'_{\lambda,w}; \zz; \beta).
\]
\end{theorem}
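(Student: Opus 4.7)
The plan is to mirror the train-argument proof of Theorem~\ref{thm:5vertex_Lascoux}, now using the modified Yang--Baxter equation of Proposition~\ref{prop:Lascoux_prime_YBE}. Specifically, I would establish
\[
Z(\states'_{\lambda, s_i w}; \zz; \beta) = \varpi_i\, Z(\states'_{\lambda, w}; \zz; \beta) \qquad \text{whenever } s_i w > w,
\]
together with the base case $Z(\states'_{\lambda, 1}; \zz; \beta) = \zz^\lambda = L_\lambda(\zz; \beta)$. The theorem then follows by induction on $\ell(w)$, using $L_{s_i w \lambda}(\zz;\beta) = \varpi_i L_{w\lambda}(\zz;\beta)$.

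To prove the recurrence, I would attach the modified $R$-matrix $R(z_i, z_{i+1})$ on the right of $\states'_{\lambda, w}$ with both right-boundary labels equal to $0$. As in Lemma~\ref{lemma:recurrence_eq}, the all-zero boundary admits a single $R$-matrix configuration contributing a factor $(1 + \beta z_i) z_{i+1}$ together with the substitution $z_i \leftrightarrow z_{i+1}$ in the rest of the model. Sliding this $R$-matrix through to the left via repeated application of Proposition~\ref{prop:Lascoux_prime_YBE}, I would obtain the $R$-matrix on the left boundary with colors $d_i, d_{i+1}$ coming from $\mathbf{d} = w w_0 \cc$. Since $s_i w > w$, the resulting left $R$-matrix admits exactly two admissible configurations: the non-crossing one with weight $(1+\beta z_{i+1}) z_i$ (from the modified weights of Equation~\eqref{eq:noncrossing_weights}) corresponding to states of $\states'_{\lambda, w}$, and the crossing one with weight $z_{i+1} - z_i$ corresponding to $\states'_{\lambda, s_i w}$. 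Equating the two partition functions and solving yields the claimed recurrence, which matches $\varpi_i$ exactly.

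For the base case, at $w = 1$ the left boundary is $w_0 \cc$, which forces the wiring diagram of $w_0$; in this configuration every pair of colors must cross exactly once. In the canonical L-shaped state (as in Figure~\ref{fig:groundstate}), each color $c_i$ travels horizontally along its entry row until its exit column $X_i = \lambda_{n+1-i} + i$ and then vertically up; a direct check shows that the only two-color vertices are of type $\tt{b}_1^{\dagger}$, so neither $\tt{b}_1$ nor $\tt{b}'_1$ appears. Hence $\states'_{\lambda, 1}$ coincides as a set of admissible states with $\overline{\states}_{\lambda, 1}$, both having the same unique state of weight $\zz^\lambda$ by the computation of Theorem~\ref{thm:5vertex_atoms} at $w = 1$.

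The main obstacle is the recurrence step: it requires carefully identifying the two admissible left-boundary $R$-matrix configurations as states of $\states'_{\lambda, w}$ and $\states'_{\lambda, s_i w}$ respectively, and verifying that the swapped non-crossing weights in \eqref{eq:noncrossing_weights} produce the Demazure--Lascoux coefficients $(1 + \beta z_{i+1}) z_i$ and $(1 + \beta z_i) z_{i+1}$ (rather than the atom coefficients of Lemma~\ref{lemma:recurrence_eq}). This swap is precisely what distinguishes $\varpi_i$ from $\overline{\varpi}_i$ in the train argument, and is the reason that the same model skeleton with this different $R$-matrix converts an atom generating function into the full Lascoux polynomial. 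A purely combinatorial alternative---constructing a weight-preserving bijection between $\states'_{\lambda, w}$ and $\states_{\lambda, w}$ and invoking Theorem~\ref{thm:5vertex_Lascoux}---seems more intricate because the $\tt{b}_1$-versus-$\tt{b}'_1$ distinction is a global property of a state rather than a local one.
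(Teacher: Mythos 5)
Your proposal is correct and follows essentially the same route as the paper: the train argument with the modified $R$-matrix of Proposition~\ref{prop:Lascoux_prime_YBE}, where the swapped non-crossing weights of Equation~\eqref{eq:noncrossing_weights} turn the atom recurrence of Lemma~\ref{lemma:recurrence_eq} into $Z(\states'_{\lambda, s_i w}; \zz; \beta) = \varpi_i Z(\states'_{\lambda, w}; \zz; \beta)$, followed by induction from the base case $Z(\states'_{\lambda,1};\zz;\beta) = \zz^{\lambda}$. (Incidentally, the weight-preserving bijection $\states'_{\lambda,w} \to \states_{\lambda,w}$ that you set aside as more intricate is in fact established in the paper immediately afterward, as a separate proposition.)
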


\begin{proof}
We have $Z(\states'_{\lambda, s_i w}; \zz; \beta) = \varpi_i Z(\states'_{\lambda, w}; \zz; \beta)$ since
\[
Z(\states'_{\lambda, s_i w}; \zz; \beta) = \frac{(1  + \beta z_{i+1}) z_i Z(\states'_{\lambda, w}; \zz; \beta) -  (1 + \beta z_i) z_{i+1} Z(\states'_{\lambda, w}; s_i \zz; \beta) \bigr)}{z_i - z_{i+1}}
\]
as in the proof of Lemma~\ref{lemma:recurrence_eq} with
\begin{align*}
(1 + \beta z_i) z_{i+1} Z(\states'_{\lambda,w}; s_i \zz; \beta) & = Z(\model; \zz; \beta) = Z(\model'; \zz; \beta)
\\ & = (1 + \beta z_{i+1}) z_i Z(\states'_{\lambda,w}; \zz; \beta) +  (z_{i+1} - z_i) Z(\states'_{\lambda,s_i w}; \zz; \beta).
\end{align*}
Thus, the claim follows by induction.
\end{proof}

\begin{proposition}
There exists a weight-preserving bijection $\xi \colon \states'_{\lambda,w} \to \states_{\lambda,w}$.
\end{proposition}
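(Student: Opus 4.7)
The plan is to construct $\xi$ explicitly by a local recolouring procedure. The crucial observation is that the vertex types that distinguish the two models, namely $\tt{b}_1$, $\tt{b}'_1$, $\tt{b}^{\dagger}$, and $\tt{b}_1^{\circ}$, all carry Boltzmann weight $1$, so any bijection that preserves the multiset of weight-carrying vertices ($\tt{a}_2$ of weight $1+\beta z$ and $\tt{b}_2$ of weight $z$) in each state is automatically weight-preserving.

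I would first establish the following structural property common to both models: every strand in the five-vertex model is a monotone lattice path from a top boundary edge to a left boundary edge (the only bends are $\tt{a}_2$ and $\tt{c}_1$, and the only horizontal and vertical straight-throughs are $\tt{b}_2$ and the two strands at $\tt{b}^{\dagger}$), so any two distinctly coloured strands meet in at most one $\tt{b}^{\dagger}$ crossing. By comparing the top and left boundary data, a crossing occurs for a pair $c_i > c_j$ precisely when $(i,j)$ is an inversion of $w w_0$. Therefore in any state of $\states_{\lambda,w}$ or $\states'_{\lambda,w}$ there is exactly one $\tt{b}^{\dagger}$ per crossing pair, and the only difference between the two models is at the other ``touch vertices'' where these two strands meet: such vertices are labelled $\tt{b}_1$ (lower colour occupying the left--top pair) in $\states_{\lambda,w}$ but $\tt{b}'_1$ (higher colour occupying the left--top pair) in $\states'_{\lambda,w}$.

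To construct $\xi$, given $S' \in \states'_{\lambda,w}$ I would, for each crossing pair $c_i > c_j$, sweep outward from the unique $\tt{b}^{\dagger}$ along both strands, applying at each adjacent $\tt{b}'_1$ an elementary local move that converts this $\tt{b}'_1$ into a $\tt{b}_1$ while relocating the $\tt{b}^{\dagger}$ one step further along the strands. After finitely many such moves no $\tt{b}'_1$ involving this colour pair remains, yielding an element of $\states_{\lambda,w}$. The inverse $\xi^{-1}$ is the analogous reverse sweep, converting $\tt{b}_1$ into $\tt{b}'_1$. Because the local moves touch only two strands at a time and leave every weight-carrying vertex unchanged, $\xi$ is bijective and weight-preserving once admissibility is established.

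The main obstacle will be verifying that each elementary move yields an admissible state and that the sweep terminates. I expect to handle this by a short case analysis of the configurations in the immediate neighbourhood of the relocated $\tt{b}^{\dagger}$, very much in the spirit of the direct verifications of the Yang--Baxter equations in Propositions~\ref{prop:YBE} and~\ref{prop:Lascoux_prime_YBE}; the analysis is finite because only a single pair of strands is modified at each step, and termination is immediate because the number of $\tt{b}'_1$ vertices between the two colours decreases by one at each move. As a sanity check, the cardinality count matches Theorems~\ref{thm:5vertex_Lascoux} and~\ref{thm:modified2}, which guarantee that the partition functions agree and hence the weight-multiplicities on both sides coincide.
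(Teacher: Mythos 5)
Your construction is essentially the paper's: the paper defines $\xi$ in a single global step (for each crossing pair, make the northeast-most touch point the $\tt{b}^{\dagger}$ and turn every other touch point into a $\tt{b}_1$), which is exactly the net effect of your sweep, and both arguments rest on the same structural facts --- each inverted pair of colours crosses exactly once, at the \emph{last} touch point in $\states'_{\lambda,w}$ and at the \emph{first} in $\states_{\lambda,w}$ --- together with the observation that $\tt{b}_1$, $\tt{b}'_1$ and $\tt{b}^{\dagger}$ all have Boltzmann weight $1$. Two cautions. First, your elementary move is stated slightly wrong: the $\tt{b}'_1$ adjacent to the $\tt{b}^{\dagger}$ becomes the \emph{new} $\tt{b}^{\dagger}$ (it is the old $\tt{b}^{\dagger}$ that becomes a $\tt{b}_1$), and the move forces the two strand segments between the two touch points to exchange colours. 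Second, because of that recolouring the admissibility check is \emph{not} confined to ``the immediate neighbourhood of the relocated $\tt{b}^{\dagger}$'': a third colour $c_k$ may touch one of the recoloured segments anywhere along its length, and one must check that each such configuration remains admissible for the new colour pair in $\states_{\lambda,w}$ (this is precisely the content the paper compresses into ``it is straightforward to see''). Finally, note that the agreement of the partition functions in Theorems~\ref{thm:5vertex_Lascoux} and~\ref{thm:modified2} shows the weight multisets coincide but cannot by itself certify that your specific map is well defined and bijective.
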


\begin{proof}
Define $\xi$ by taking an state in $\states'_{\lambda,w}$ and for all colors ${\color{red} c_i}$ and ${\color{blue} c_j}$ such that the colors cross (\textit{i.e.}, there is at least one vertex $\tt{b}^{\dagger}$ and possibly $\tt{b}'_1$), replace the northeast $\tt{b}'_1$ with a $\tt{b}^{\dagger}$ and all other $\tt{b}'_1$ and $\tt{b}^{\dagger}$ with $\tt{b}_1$.
It is straightforward to see that $\xi$ is a bijection as the last point of contact between the strands colored ${\color{red} c_i}$ and ${\color{blue} c_j}$ must be a $\tt{b}^{\dagger}$ for any state in $\states'_{\lambda,w}$.
\end{proof}

We end this section with some remarks on our Boltzmann weights.
Our (colored) $R$- and $L$-matrices are $\beta$-generalizations of the (colored) $R$- and $L$-matrices in~\cite{BBBG19}.
They are different to the $R$- and $L$-matrices in~\cite{BBBGIwahori} and cannot be realized as a specialization of the $R$- and $L$-matrices of~\cite{BorodinWheelerColored}.
To see this, note that in our setting the configuration 
\begin{gather*}
\begin{tikzpicture}[baseline=0]
\coordinate (a) at (-.75, 0);
\coordinate (b) at (0, .75);
\coordinate (c) at (.75, 0);
\coordinate (d) at (0, -.75);
\coordinate (aa) at (-.75,.5);
\coordinate (cc) at (.75,.5);
\draw[] (a)--(0,0);
\draw[line width=0.5mm, brown] (b)--(0,0);
\draw[] (c)--(0,0);
\draw[line width=0.5mm, brown] (d)--(0,0);
\draw[ ,fill=white] (a) circle (.25);
\draw[line width=0.5mm, brown,fill=white] (b) circle (.25);
\draw[, fill=white] (c) circle (.25);
\draw[line width=0.5mm, brown, fill=white] (d) circle (.25);
\node at (0,1) { };
\node at (a) {$0$};
\node at (b) {$d$};
\node at (c) {$0$};
\node at (d) {$d$};
\path[fill=white] (0,0) circle (.2);
\node at (0,0) {$z$};
\end{tikzpicture}
\end{gather*}
is not admissible (\textit{i.e.}, it has Boltzmann weight equal to $0$), while in~\cite[Fig.~10]{BBBGIwahori} and~\cite[Fig.~2.2.2, Fig.~2.2.6]{BorodinWheelerColored}, this particular configuration is admissible and its weight cannot be specialized to $0$ for all $z$ unless we take $s \to \infty$ and $q \to 0$. These limits would then not allow us to match the weight of our $\tt{a}_2$ configuration with the one in~\cite{BorodinWheelerColored}.  
The reason for the difference is that going from ~\cite{BBBGIwahori} to the present work, we set their $v \to 0$ and then do a $\beta$ deformation which lands outside of the setting described in~\cite{BorodinWheelerColored}. 
This also holds even for the special case of $\beta = -1$.

Both the lattice model we introduce in this paper and the polynomials we study bear resemblance to the higher spin $U_q(\widehat{\mathfrak{sl}}_2)$ model and symmetric functions studied by Borodin and Petrov~\cite{Borodin17,BorodinPetrov18}.
However we are not able to establish a direct connection between the two settings.
This statement is also echoed in~\cite[Intro., Sec.~8.4]{Borodin17}, where Borodin states he does not know of a direct link between his work and that of Motegi and Sakai~\cite{MS13}.
In particular, Borodin gives a formula for his polynomials as a ratio of determinants that is similar to but different from that of Grothendieck polynomials (see Equation~\eqref{eq:determinant_formula} and~\cite{IN13}).

It would be interesting to understand the quantum group associated to our $R$- and $L$- matrices.
The quantum groups associated to the lattice models in~\cite{BorodinWheelerColored} and~\cite{BBBGIwahori} are $U_q(\widehat{\mathfrak{sl}}_n)$ and $U_q\bigl(\widehat{\mathfrak{gl}}(n|1)\bigr)$, respectively.
In both cases it is quite easy to see that the $R$-matrix is related to the standard representation of the corresponding quantum group.
This is in contrast to our case, where we were unable to identify our $R$-matrix as the $R$-matrix associated to a known quantum group (or with a Drinfeld twist).

\section{Lascoux atoms to Key tableaux and set-valued skyline tableaux}
\label{sec:proofofconjecture}

In this section, we prove Conjecture~\ref{conj:svt_Lascoux} and Conjecture~\ref{conj:skyline_tableaux}.

We refine the states of $\overline{\states}_{\lambda,w}$ to allow markings at the configurations $\tt{a}_2$.
More precisely, a \defn{marked state} is a pair $(S, M)$ with $S \in \overline{\states}_{\lambda,w}$ such that $M$ is some subset of all configurations of $\tt{a}_2$.
We note that $\fP$ naturally extends to a bijection between marked states $\bigsqcup_{w \in \sym{n}} \overline{\states}_{\lambda,w}$ and marked GT patterns with top row $\lambda$ as each configuration $\tt{a}_2$ in a state $S$ corresponds to a position where a marking is possible in $\fP(S)$.
As before, the weight gets twisted by $w_0$.
Thus, for any state $S \in \overline{\states}_{\lambda,w}$, we have
\begin{equation}
\label{eq:marked_weight}
\wt(S) = \sum_{(S, M)} \beta^{|M|} \wt(S, M) = \sum_{(\fP(S), M)} \beta^{|M|} w_0 \wt(\fP(S), M),
\end{equation}
where the first sum is over all possible markings of $S$ and the second sum is over all possible markings of $\fP(S)$ (see also Equation~\eqref{eq:wt_by_GT_patterns}).
We also can do the same refinement for $\states_{\lambda,w}$.

\begin{theorem}
\label{thm:key_theorem}
Conjecture~\ref{conj:svt_Lascoux} is true, that is to say
\[
\overline{L}_{w\lambda}(\zz; \beta) = \sum_{\substack{T \in \svt^n(\lambda) \\ K(T) = K_{w\lambda}}} \beta^{\excess(T)} \zz^{\wt(T)},
\qquad\qquad
L_{w\lambda}(\zz; \beta) = \sum_{\substack{T \in \svt^n(\lambda) \\ K(T) \leq K_{w\lambda}}} \beta^{\excess(T)} \zz^{\wt(T)}.
\]
\end{theorem}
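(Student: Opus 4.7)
The plan is to upgrade the bijection $\fP$ to a weight-preserving correspondence between marked states in $\overline{\states}_{\lambda,w}$ and the set-valued tableaux $T \in \svt^n(\lambda)$ with $K(T) = K_{w\lambda}$, from which the atom identity follows via Theorem~\ref{thm:5vertex_atoms}. The polynomial identity can then be deduced either by running the same argument with $\states_{\lambda,w}$ and Theorem~\ref{thm:5vertex_Lascoux}, or by summing over $u \leq w$ using~\eqref{eq:Lascoux_into_atoms} together with the observation stated just before Conjecture~\ref{conj:svt_Lascoux} that $K(T) \leq K_{w\lambda}$ if and only if $K(T) = K_{u\lambda}$ for some $u \leq w$.

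First, I would apply Theorem~\ref{thm:5vertex_atoms} and the marked-weight formula~\eqref{eq:marked_weight} to write
\[
\overline{L}_{w\lambda}(\zz;\beta) = \sum_{S \in \overline{\states}_{\lambda,w}} \sum_M \beta^{|M|}\, w_0 \wt\bigl(\fP(S),M\bigr),
\]
where $M$ ranges over markings of the $\tt{a}_2$-vertices of $S$. Passing through $\phi$ identifies marked GT patterns with set-valued tableaux of shape $\lambda$, and composing with the Lusztig involution $T \mapsto T^*$ absorbs the $w_0$ twist via $\wt(T^*) = w_0\wt(T)$; since $|M|$ matches $\excess(\phi(\fP(S),M))$ and $\excess$ is preserved by $*$, this gives a weight-preserving injection from marked states of $\overline{\states}_{\lambda,w}$ onto some subset $I_w \subseteq \svt^n(\lambda)$ with
\[
\overline{L}_{w\lambda}(\zz;\beta) = \sum_{T \in I_w} \beta^{\excess(T)} \zz^{\wt(T)}.
\]

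The core step is to identify $I_w = \{T \in \svt^n(\lambda) : K(T) = K_{w\lambda}\}$. I would leverage the $\beta = 0$ case from~\cite{BBBG19} (equivalently, Lascoux--Sch\"utzenberger), in which unmarked states of $\overline{\states}_{\lambda,w}$ are in bijection with semistandard Young tableaux $S$ satisfying $k(S) = K_{w\lambda}$. To extend to the set-valued setting, note that marking an $\tt{a}_2$ vertex of a state $S$ corresponds, under $\phi$ and $*$, to inserting additional set-valued entries into existing boxes of the underlying semistandard skeleton $\min(T^*)^*$, while the skeleton itself is unchanged. Because $K(T) = k\bigl(\min(T^*)^*\bigr)$ depends only on this skeleton, the right-key condition $K(T) = K_{w\lambda}$ is equivalent to the semistandard condition on the unmarked state, which is already known to select exactly $\overline{\states}_{\lambda,w}$.

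The main obstacle is the crystal-theoretic bookkeeping required to make the previous paragraph precise: the Lusztig involution $*$ is defined via the crystal on $\svt^n(\lambda)$ from~\cite{MPS18} rather than directly on marked GT patterns, so one must verify that the operation $T \mapsto \min(T^*)^*$ actually yields the ``unmarked'' semistandard skeleton and commutes with the marking process in the sense needed, and also that $\phi \circ *$ of an unmarked state recovers the semistandard Young tableau that the BBBG19 bijection assigns to that state. Once this compatibility is established, the atom identity is immediate, and the polynomial identity follows from Theorem~\ref{thm:5vertex_Lascoux} by the same argument (now allowing all colored crossings consistent with $w w_0$), or alternatively by summing the atom identity over $u \leq w$ and invoking~\eqref{eq:Lascoux_into_atoms}.
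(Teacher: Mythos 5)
Your proposal follows essentially the same route as the paper's proof: express $\overline{L}_{w\lambda}$ as the partition function of $\overline{\states}_{\lambda,w}$ via Theorem~\ref{thm:5vertex_atoms}, pass through $\fP$ and $\phi$ to marked GT patterns and set-valued tableaux, twist by the Lusztig involution to absorb the $w_0$ in the weight, and observe that $K(T) = k\bigl(\min(T^*)^*\bigr)$ is by construction computed from the unmarked state, so the marking process cannot change the Key; the second identity is likewise obtained either from $\states_{\lambda,w}$ and Theorem~\ref{thm:5vertex_Lascoux} or from Equation~\eqref{eq:Lascoux_into_atoms}. The compatibility bookkeeping you flag as the main obstacle is exactly the content the paper's (quite terse) proof relies on, so your account is, if anything, slightly more explicit about the same argument.
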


\begin{proof}
We note that the Lusztig involution provides a weight preserving bijection between marked GT patterns for $\overline{\states}_{\lambda,w}$ and marked states of $\overline{\states}_{\lambda,w}$.
Forgetting all markings in a marked GT pattern is equivalent to taking the minimum entry in the corresponding set-valued tableau.
Thus, in order to describe the action of forgetting the markings on the state and be weight preserving, we are required to conjugate by the Lusztig involution.
Since the key tableau and the Lascoux atom is computed based on the unmarked state (Theorem~\ref{thm:5vertex_atoms}), the first claim follows from Equation~\eqref{eq:marked_weight}.

The second claim can be shown from the first and Equation~\eqref{eq:Lascoux_into_atoms} or directly by a similar argument as above with $\states_{\lambda,w}$ and Theorem~\ref{thm:5vertex_Lascoux}.
\end{proof}

\begin{example}
Let us examine how the proof of Theorem~\ref{thm:key_theorem} works on a particular example.
Consider the following set-valued tableau of shape $\lambda = (4,2,1)$ and its image under the Lusztig involution for $n = 3$:
\[
T = \ytableaushort{1111,2{2,\!3},3}\,,
\qquad\qquad
T^* = \ytableaushort{1133,2{2,\!3},3}\,.
\]
Thus the image of $T^*$ under $\phi$ is the state given by Figure~\ref{fig:coloredstate} with the unique $\tt{a}_2$ vertex being marked.
Then, we have
\[
\min(T^*) = \ytableaushort{1133,22,3}\,,
\]
which is the corresponding unmarked state under $\fP^{-1} \circ \phi^{-1}$. Applying the Lusztig involution again and taking the corresponding key tableau, we obtain
\[
\min(T^*)^* = \ytableaushort{1112,23,3}\,,
\qquad\qquad
k\bigl( \min(T^*)^* \bigr) = K(T) = \ytableaushort{1222,23,3}\,.
\]
We have $\wt\bigl(K(T) \bigr) = (1,4,2) = s_1 s_2 \lambda$, which agrees with $(\fP^{-1} \circ \phi^{-1})(T^*) \in \overline{\states}_{\lambda, s_1 s_2}$.
\end{example}

\begin{example}
Let $\lambda = (4, 2, 1)$.
We note there are two states in addition to the one in Figure~\ref{fig:coloredstate} in $\overline{\states}_{\lambda,s_1 s_2}$ with Boltzmann weights $(1 + \beta z_1)^2 z_1^2 z_2^3 z_3^2$ and $(1 + \beta z_1)^2 z_1 z_2^4 z_3^2$.
By applying $\phi \circ \fP$ and the Lusztig involution to the marked states of $\overline{\states}_{\lambda,s_1 s_2}$, we obtain the set-valued tableaux
\[
%
%
\begin{array}{c@{\quad}c@{\quad}c@{\quad}c@{\quad}c}
\ytableaushort{1111,2{2,\!3},3}\,,
&
\ytableaushort{111{1,\!2},2{2,\!3},3}\,,
&
\ytableaushort{111{2,\!3},22,3}\,,
&
\ytableaushort{11{1,\!2}2,2{2,\!3},3}\,,
&
\ytableaushort{112{2,\!3},22,3}\,,
\\[30pt]
\ytableaushort{1112,23,3}\,,
&
\ytableaushort{1112,2{2,\!3},3}\,,
&
\ytableaushort{1122,23,3}\,,
&
\ytableaushort{1122,2{2,\!3},3}\,,
&
\ytableaushort{1222,23,3}\,.
\end{array}
\]
These are precisely the elements of the set $\{T \in \svt^n(\lambda) \mid K(T) = K_{w\lambda} \}$, where $K_{w\lambda}$ is the lower right semistandard Young tableau given above.
Taking the sum over these set-valued tableaux, we obtain
\[
\beta^2 (z_1^4 z_2^3 z_3^2 + z_1^3 z_2^4 z_3^2)
+ \beta (z_1^4 z_2^2 z_3^2 + 2 z_1^3 z_2^3 z_3^2 + 2 z_1^2 z_2^4 z_3^2)
+ z_1^3 z_2^2 z_3^2  + z_1^2 z_2^3 z_3^2 + z_1 z_2^4 z_3^2,
\]
which equals $\overline{L}_{s_1 s_2 \lambda}(z_1, z_2, z_3; \beta)$ as stated by Theorem~\ref{thm:key_theorem}.
\end{example}

\begin{theorem}
\label{thm:skyline_bijection}
Conjecture~\ref{conj:skyline_tableaux} is true, that is to say
\[
\overline{L}_{w\lambda} = \sum_{S \in \skyline(w\lambda)} \beta^{\excess(S)} \zz^{\wt(S)}.
\]
\end{theorem}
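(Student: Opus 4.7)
The plan is to leverage Theorem~\ref{thm:5vertex_atoms} together with Monical's bijection~\cite[Thm.~2.4]{Monical16} from (reverse) set-valued tableaux to set-valued skyline tableaux, so that it suffices to exhibit a weight-preserving bijection
\[
\bigsqcup_{\text{markings}} \overline{\states}_{\lambda,w} \longleftrightarrow \skyline(w\lambda).
\]
By Equation~\eqref{eq:marked_weight} and Theorem~\ref{thm:5vertex_atoms}, such a bijection immediately yields the conjectured formula.

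First I would strip markings and analyze the underlying combinatorics. Via $\fP$ an unmarked state of $\overline{\states}_{\lambda,w}$ is a GT pattern with top row $\lambda$, and by post-composing with $i \mapsto n+1-i$ this becomes a reverse semistandard Young tableau; the entry reversal exactly cancels the $w_0$-twist on weights noted in Equation~\eqref{eq:wt_by_GT_patterns}. The subset of reverse SSYT arising from $\overline{\states}_{\lambda,w}$ is characterized by the colored boundary conditions and corresponds, under Mason's bijection~\cite{Mason08,Mason09}, to skyline tableaux of shape $w\lambda$ with no free entries. This is the $\beta=0$ case, where $\overline{L}_{w\lambda}(\zz;0)$ is the Demazure atom and the identity is already known. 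This step uses precisely the semistandard case of~\cite[Thm.~2.4]{Monical16}.

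Next I would reinstate markings as free entries. A marking on an $\tt{a}_2$ vertex in a state corresponds, via $\phi \circ \fP$, to an additional integer added to the rightmost box of row $i$ at step $j$ of the recursion, precisely at the positions where $\lambda^{(j)}_{i+1} < \lambda^{(j-1)}_i$. Under Monical's extension of Mason's bijection, this extra entry becomes a free entry in the unique column of the associated skyline tableau, placed in the cell of the least anchor that does not violate the weak-decrease condition~(2) — which is exactly rule~(4) of $\skyline(w\lambda)$. A marked $\tt{a}_2$ contributes $\beta z_i$ to the state's Boltzmann weight, which matches a free entry contributing $\beta$ to $\excess$ and $z_i$ to $\wt$, so the bijection is weight preserving.

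The main obstacle is verifying that the marking-placement rules and the inversion-triple/free-entry rules correspond exactly under the composite bijection. One must check that the positions in a GT pattern at which markings are allowed match the columns/rows in the skyline tableau at which free entries can be legally placed, and that the inversion triple condition~(3) is automatic for skyline tableaux produced by Mason's map from reverse SSYT counted by $\overline{\states}_{\lambda,w}$. Once these combinatorial identifications are in place — essentially by porting the analysis of~\cite[Sec.~4]{MPS18} through Mason's bijection as in~\cite[Thm.~2.4]{Monical16} — Theorem~\ref{thm:5vertex_atoms} and Equation~\eqref{eq:marked_weight} finish the proof.
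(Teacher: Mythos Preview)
Your proposal is correct and follows essentially the same route as the paper: pass from marked states to reverse set-valued tableaux via $\fP$ and the substitution $i\mapsto n+1-i$ (which the paper calls $\psi$), apply Mason's bijection~\cite{Mason08} on the unmarked/anchor level, and then invoke Monical's extension~\cite[Thm.~2.4]{Monical16} to match markings with free entries; Theorem~\ref{thm:5vertex_atoms} and Equation~\eqref{eq:marked_weight} then finish the argument. The paper's proof treats the ``main obstacle'' you flag---that the $\tt{a}_2$ marking positions correspond exactly to the admissible free-entry placements---just as briefly as you do, asserting the correspondence rather than verifying it cell by cell.
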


\begin{proof}
Recall the bijection $\phi$ between marked GT patterns and set-valued tableaux.
As previously mentioned, the map $\phi \circ \fP$ between marked states and set-valued tableaux is not weight preserving, but if we instead consider \emph{reverse} set-valued tableaux by replacing each $i \mapsto n+1-i$, then this modified bijection $\psi \colon \bigsqcup_{w \in \sym{n}} \overline{\states}_{\lambda,w} \to \rsvt^n(\lambda)$ is weight preserving.
Furthermore, we note that for a marked state $(S, M)$ such that $\psi(S, M) = T$, we have $\psi(S, \emptyset) = \max T$.
Next, we restrict the domain of $\psi$ to the unmarked states in $\overline{\states}_{\lambda,w}$, then we obtain a set of reverse semistandard tableaux that is in bijection with semistandard skyline tableaux by~\cite{Mason08}.
We denote this bijection by $\eta$, which is given by organizing the columns so that the skyline tableaux conditions are satisfied.
The possible markings of a particular state $S$ correspond to the boxes in $\psi(S, \emptyset)$ where we can add \emph{smaller} entries and still have a reversed set-valued tableau.
Next, the bijection $\eta$ is extended to reversed set-valued tableaux and set-valued skyline tableaux in~\cite[Thm.~2.4]{Monical16} by simply placing the free entries in each column in the appropriate locations.
Therefore, the map $\eta \circ \psi|_{\overline{\states}_{\lambda,w}}$ is a weight preserving bijection from the marked states $\overline{\states}_{\lambda,w}$ to $\skyline(w\lambda)$ as any possible corner to be marked in a state corresponds to a possible free entry in the set-valued skyline tableau.
Hence, the claim follows.
\end{proof}

\begin{example}
Consider the state $S$ from Figure~\ref{fig:coloredstate}.
The two monomials in the Boltzmann weight $\wt(S) = z_1^3 z_2^2 z_3^2 + \beta z_1^4 z_2^2 z_3^2$ correspond to, under $\psi$, the reverse set-valued tableaux
\[
\ytableaushort{3311,22,1}\,,
\qquad\qquad
\ytableaushort{3311,2{2,\!1},1}\,,
\]
with the one on the right corresponding to marking the unique $\tt{a}_2$ vertex in $S$.
By then applying $\eta$, we obtain the set-valued skyline tableaux
\[
\ytableaushort{1,2211,33}\,,
\qquad\qquad
\ytableaushort{1,2{2,\!1}11,33}\,.
\]
\end{example}

\appendix
\section{\texorpdfstring{\textsc{SageMath}}{SageMath} code for computing the \texorpdfstring{$R$}{R}-matrix}
\label{sec:sage_code}

We give the \textsc{SageMath}~\cite{sage} code we used to compute the $R$-matrix such that Proposition~\ref{prop:YBE} holds.

\lstset{numbers=left}
\begin{lstlisting}
sage: A.<b,z1,z2> = ZZ[]
sage: B.<x1,x2,x3,x4,x5,x6,x7,x8,x9,x10,x11,x12,x13,x14> = A[]
sage: def L_wt(u, r, d, l, z, crosses=[]):
....:     if u == r == d == l == 0:
....:         return 1
....:     if u == l and d == r:
....:         if u == 0:
....:             return 1+b*z
....:         if r == 0:
....:             return 1
....:         if ((u,r) in crosses or (r,u) in crosses):
....:             if u <= r:
....:                 return 1
....:         else:
....:             if u >= r:
....:                 return 1
....:         return 0
....:     if l == r and u == d:
....:         if u == 0:
....:             return z
....:         if u > r > 0 and ((u,r) in crosses or (r,u) in crosses):
....:             return 1
....:         return 0
....:     return 0
sage: def R_wt(ur, lr, ll, ul, crosses=[]):
....:     if ur == lr == ll == ul == 0:
....:         return x1
....:     if ur == lr == ll == ul:
....:         return x8
....:     if ul == ur and ll == lr:
....:         if ll == 0:
....:             return x2
....:         if ul == 0:
....:             return x9
....:         if ((ul,ll) in crossings or (ll,ul) in crosses):
....:             if ul > ll:
....:                 return x5
....:             if ul < ll:
....:                 return x6
....:         if ((ul,ll) in crossings or (ll,ul) in crosses):
....:             if ul > ll:
....:                 return x11
....:             if ul < ll:
....:                 return x12
....:         raise AssertionError("the all equal case done previously")
....:     if ul == lr and ll == ur:
....:         if ll == 0:
....:             return x3
....:         if ul == 0:
....:             return x4
....:         if ((ul,ll) in crossings or (ll,ul) in crossings):
....:             if ul < ll:
....:                 return x7
....:              if ul > ll:
....:                 return x10
....:         if ((ul,ll) in crossings or (ll,ul) in crossings):
....:             if ul < ll:
....:                 return x13
....:              if ul > ll:
....:                 return x14
....:         raise AssertionError("the all equal case done previously")
....:     return 0
sage: states = list(cartesian_product([[0,1,2,3],[0,1,2,3],[0,1,2,3]]))
sage: R = matrix([[R_wt(s[1],s[0],t[1],t[0]) if s[2] == t[2] else 0
....:              for t in states] for s in states])
sage: L1 = matrix([[L_wt(s[2],s[0],t[2],t[0],z2) if s[1] == t[1] else 0
....:               for t in states] for s in states])
sage: L2 = matrix([[L_wt(s[2],s[1],t[2],t[1],z1) if s[0] == t[0] else 0
....:               for t in states] for s in states])
sage: RLL = R*L1*L2 - L2*L1*R
sage: RLLSR = RLL.change_ring(SR)
sage: solve([RLLSR[i,j] == 0 for i in range(len(states))
....:        for j in range(len(states))],
....:       SR.var('x1,x2,x3,x4,x5,x6,x7,x8,x9,x10,x11,x12,x13,x14'))
[[x1 == (b*r1*z1 + r1)/(b*z2 + 1), x2 == (b*r1*z1 + r1)/(b*z2 + 1),
  x3 == -(r1*z1 - r1*z2)/(b*z2 + 1), x4 == 0, x5 == r1*z1/z2,
  x6 == (b*r1*z1 + r1)/(b*z2 + 1), x7 == -(r1*z1 - r1*z2)/(b*z2^2 + z2),
  x8 == (b*r1*z1 + r1)/(b*z2 + 1), x9 == r1, x10 == 0]]
sage: Rp = R(x1=(1+b*z1)*z2, x2=(1+b*z1)*z2, x3=(z2-z1)*z2, x4=0,
....:        x5=(1+b*z2)*z1, x6=(1+b*z1)*z2, x7=z2-z1, x8=(1+b*z1)*z2,
....:        x9=(1+b*z2)*z2, x10=0, x11 == (b*r10*z1 + r10)/(b*z2 + 1),
....:        x12 == r10*z1/z2, x13 == 0, x14 == 0)
sage: Rp*L1*L2 == L2*L1*Rp
True
\end{lstlisting}
To show Proposition~\ref{prop:uncolored_YBE}, instead use
\lstset{numbers=none}
\begin{lstlisting}
sage: states = list(cartesian_product([[0,1],[0,1],[0,1]]))
\end{lstlisting}
and ignore the variables $x_5, x_6, x_7, x_{10}, x_{11}, x_{12}, x_{13}, x_{14}$.
We can compute the $R$-matrix for Proposition~\ref{prop:Lascoux_prime_YBE} by changing Line~12 above to
\begin{lstlisting}
....:             if u <= r:
\end{lstlisting}

\bibliographystyle{alpha} 
\bibliography{grothendiecks}

\end{document}